\numberwithin{equation}{section}
\numberwithin{figure}{section}
\def\Z{\mathbb{Z}}
\def\<{\langle}
\def\>{\rangle}
\def\S{\mathcal{S}}
\renewcommand{\hat}{\widehat}
\renewcommand{\tilde}{\widetilde}
\newtheorem{lem}{Lemma}
\newtheorem{thm}[lem]{Theorem}
\newtheorem{cor}[lem]{Corollary}
\newenvironment{pfof}[1]{\par\medskip\noindent{\em Proof of #1}.}{\hfill $\square$\par\medskip}
\title{On singular equations over torsion-free groups}
\author{Martin Edjvet}
\address{School of Mathematical Sciences\\ University of Nottingham\\ University Park\\ Nottingham NG7 2RD\\ UK}
\email{martin.edjvet@nottingham.ac.uk}
\author{James Howie}
\address{Maxwell Institute and School of Mathematical and Computer Sciences\\ Heriot Watt University\\ Edinburgh EH14 4AS\\ UK}
\email{J.Howie@hw.ac.uk}
\thanks{The second author was supported in part by 
Leverhulme Trust Emeritus Fellowship EM-2018-023$\backslash$9 }
\keywords{Equations over groups, One-relator products}
\subjclass[2010]{Primary 20F70, 20F05. Secondary 20E06, 20F06}
\begin{document}

\begin{abstract}
We prove a Freiheitssatz for one-relator products of torsion-free groups, where the relator has syllable length at most $8$.  This result has applications to equations over torsion-free groups: in particular a singular equation of syllable length at most $18$ over a torsion-free group has a solution in some overgroup.
\end{abstract}

\maketitle

\section{Introduction}

An {\em equation} over a group $A$ in an {\em indeterminate} $t$ is just an expression
$w(t)=1$, where $w=w(t)$ is a word in the free product $A*\<t\>$.  This equation
has a solution in $A$ (resp. in an overgroup $G$ of $A$) if there is an element
$h\in A$ (resp. $h\in G$) such that substituting $h$ for $t$ in $w$ and evaluating in $A$ (resp. $G$) gives the identity.  It is well-known that a solution for $w(t)=1$ exists
in some overgroup if and only if the natural map from $A$ to $G:=(A*\<t\>)/N(w)$ is
injective (where $N(w)$ denotes the normal closure of $w$ in $A*\<t\>$) -- in which case
$G$ may be taken to be the overgroup in question, and the coset $t.N(w)$ to be the element
$h$ in the definition.

Thus there is a natural connection between the study of equations over groups and
that of {\em one-relator products}.  A one-relator product of groups $A_\lambda$
($\lambda\in\Lambda$) is the quotient $G$ of the free product $\ast_{\lambda\in\Lambda} A_\lambda$ by the normal closure of a single element $w$.  A one-relator product
$G$ of groups $A_\lambda$ is said to satisfy the {\em Freiheitssatz} if each $A_\lambda$
embeds in $G$ via the natural map.  This idea generalises the classical Freiheitssatz
of Magnus \cite{Mag}.  In the case where $\{A_\lambda,\lambda\in\Lambda\}=\{A,\<t\>\}$,
the Freiheitssatz says that the equation $w(t)=1$ has a solution in $G$, and moreover
the solution element $h=t.N(w)$ has infinite order in $G$.

Equations over groups and one-relator products have been studied extensively by various authors over a long period.  (See, for example, \cite{CH}, \cite{DH} and \cite{FR}.)

In the present article we consider equations over torsion-free groups.  The most striking
result here is that of Klyachko \cite{Kl}, that any equation with {\em exponent sum} $\pm 1$
in the indeterminate has a solution in an overgroup.  If we write
$$w=a_1t^{m(1)}\cdots a_kt^{m(k)}\in A*\<t\>$$
as a cyclically reduced word with $a_i\in A$ and $m(i)\in\Z$ for each $i$, then by definition the {\em exponent sum}
is $\sum_{i=1}^k m(i)$.  An equation with exponent sum $0$ is called {\em singular};
one with non-zero exponent sum is called {\em non-singular}.  In general it seems to be
easier to prove results for non-singular equations than for singular ones.
For example if $k=4$ and
$m(i)=1$ ($1 \leq i \leq 4$) then there is always a solution \cite{Lev}; if $m(j)=1$ ($1 \leq j \leq 3$) and $m(4)=-1$ then there is always a solution \cite{EH};
if $m(1)=m(2)=1$ and $m(3)=m(4)=-1$ then there is a solution provided one assumes that 
$a_1^2 \neq 1$, $a_3^2 \neq 1$ and $a_1 a_2 \neq 1$ \cite{E}; or if $m(1)=m(3)=1$ and $m(2)=m(4)=-1$ the problem remains very much open.

Our interest in the present article is more focussed on singular equations, in the spirit
of \cite{BH} (see also \cite{BE}, \cite{IK}, \cite{Kim} and \cite{Pri}).  Our principal result is the following.

\begin{thm}\label{main}
Let $A,B$ be torsion-free groups, and let $$w=a_1b_1...a_kb_k \in A*B$$
where $a_i\in A$ and $b_i\in B$ for each $i=1,...,k$ and where the $a_i$ and the $b_i$ are non-trivial.  If $k\le 4$ then each of $A,B$ embeds in
$$G:=(A*B)/N(w)$$
via the natural map.
\end{thm}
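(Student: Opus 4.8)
The plan is to argue by contradiction, by means of a combinatorial curvature (discrete Gauss--Bonnet) argument applied to pictures over the relative presentation $\P=\<A,B\mid w\>$; such an argument is the natural tool here, since $A$ and $B$ are assumed only to be torsion-free, with no control over the orders of their elements. We first record two reductions. If $k=1$ then $w=a_1b_1$, so in $G$ the relation $b_1=a_1^{-1}$ identifies the infinite cyclic subgroups $\<a_1\>\le A$ and $\<b_1^{-1}\>\le B$ (both infinite, since $A,B$ are torsion-free), whence $G=A*_{\<a_1\>=\<b_1^{-1}\>}B$ is a free product with amalgamation and both factors embed. Secondly, if $w$ is a proper power, say $w=v^n$ with $v=a_1b_1\cdots a_\ell b_\ell$ of syllable length $2\ell<2k$ (taking a suitable cyclic conjugate of $w$ if necessary), then $N(v^n)\subseteq N(v)$ gives a quotient map $G\to(A*B)/N(v)$ through which $A\to G$ factors; the theorem applied to $v$ --- which has a strictly smaller value of $k$ --- shows that $A\to(A*B)/N(v)$ is injective, hence so is $A\to G$, and symmetrically for $B$. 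So henceforth we may assume $2\le k\le4$ and that $w$ is cyclically reduced (which it is) and not a proper power.

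Suppose for contradiction that, say, $A$ does not embed in $G$. Then there is a non-empty reduced picture $\G$ over $\P$, on a surface $\Sigma$ that is a disc (with boundary label a non-trivial element of $A$) or the $2$-sphere: a finite graph whose ``vertices'' are discs, each carrying a cyclic conjugate of $w^{\pm1}$, whose arcs are labelled by letters of $A$ or of $B$, and whose complementary regions are labelled by elements of $A$ or of $B$, such that the labels of the regions met in cyclic order around any disc spell $w^{\pm1}$, and such that no two mutually inverse discs can be bridged together (reducedness). The decisive use of torsion-freeness is that $\G$ can have no region with too few sides: such a region would force an equation between a short subword of $w$ and a short subword of a cyclic conjugate of $w^{\pm1}$, and this in turn would make $\G$ non-reducible, or exhibit $w$ as a proper power, or permit a syllable-length-reducing modification of $w$ --- all of which have been excluded. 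The data here is organised by the \emph{star graph} $\G_\P$ of $\P$, whose vertices are the $2k$ syllable-endpoints of one copy of $w$ and whose edges encode the admissible adjacencies; the boundary of each region of $\G$ traces out a reduced closed path in $\G_\P$, so a lower bound on the lengths of such paths --- a ``weight test'' type condition on $\G_\P$ --- translates into a lower bound on the number of sides of every region.

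Granted such a bound, the curvature count finishes the argument: assigning an angle to each corner of $\G$ --- for instance giving each corner of a $d$-sided region the angle $2\pi/d$ --- one has a discrete Gauss--Bonnet identity with positive total $2\pi\chi(\Sigma)$, and one verifies that, provided all regions are large enough, every disc, every interior region, and the boundary of $\G$ receive non-positive curvature, a contradiction. The real content is therefore to establish the required lower bounds on region size, and for $2\le k\le4$ this is a finite task. One stratifies by the ``coincidence pattern'' of $w$ --- which of the equalities $a_i=a_j^{\pm1}$ and $b_i=b_j^{\pm1}$ hold among its syllables --- and checks, pattern by pattern, that either $\G_\P$ satisfies a small-cancellation-type condition strong enough for the uniform estimate above, or $w$ belongs to a short list of exceptional forms. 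Each exceptional form is then handled on its own: by a tailored, non-uniform angle assignment that redistributes curvature away from the problematic discs; by recognising $G$ as an amalgam or HNN extension over a subgroup of $A$ or of $B$ and invoking Bass--Serre theory; or by reducing to a case with a smaller value of $k$.

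The main obstacle is exactly this list of exceptional configurations. When $w$ has sufficient internal repetition among its syllables, $\G_\P$ acquires short reduced cycles, so that $2$- and $3$-sided regions of $\G$ cannot be excluded a priori and the uniform curvature budget no longer suffices. Because only torsion-freeness is available --- in particular no lower bound on element orders can be used --- each of these configurations has to be treated individually, and at every reduction one must check that the hypothesis ``all $a_i$ and $b_i$ are non-trivial'' is maintained. Assembling a single coherent family of curvature functions and structural arguments that covers all of the exceptional $k=4$ patterns is the crux of the proof.
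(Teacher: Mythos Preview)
Your high-level strategy --- reduced pictures, star-graph/weight tests, Gauss--Bonnet, and a case analysis by the coincidence pattern among the $a_i$ and $b_j$ --- is indeed the framework the paper uses, and your closing paragraph correctly locates the difficulty. But what you have written is an outline, not a proof: you identify ``the crux'' and then stop. The content of the theorem is precisely the exceptional case analysis you defer, and several of the ideas needed to close it are absent from your sketch.

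Two are essential. First, the paper does \emph{not} run the whole argument over the single relative presentation $\P=\langle A,B\mid w\rangle$; it splits according to whether one of $A,B$ is cyclic. When, say, $B=\langle t\rangle$, pictures over $\P$ alone do not suffice: one needs external input from \cite{BH} (which reduces to the equation $a_1a_2a_3a_4=1$ and supplies a group-ring unit criterion, used via a two-term zero-divisor lemma in $\mathbb{F}_pA$), together with local indicability of certain quotients (Klein bottle and Baumslag--Solitar images), before any curvature distribution can finish the job. Second, in the non-cyclic case the decisive trick is not merely a ``tailored, non-uniform angle assignment'' on $\Gamma_\P$, but passing to a sequence of \emph{equivalent} relative presentations with one or two extra generators $X,Y$ (the paper uses $\mathcal{P}_{X}$ and then $\mathcal{P}_{X,1},\dots,\mathcal{P}_{X,25}$), each with its own star graph on which a suitable weight function exists. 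Your proposal gives no indication of either mechanism, and the naive weight test on $\Gamma_\P$ genuinely fails in many of the patterns you would meet; without these moves the case analysis does not terminate.
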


This result has some obvious consequences for the study of
equations over torsion-free groups.  The first of these is an
extension of \cite[Theorem 2(iv)]{BH} from $k\le 3$ to $k\le 4$,
and is obtained by putting $B=\<t\>\cong\Z$ in Theorem \ref{main}.

\begin{cor}\label{c1}
Let $G=(A*\<t\>)/N(w)$, where $A$ is torsion-free and let
$$w=a_1t^{m(1)}\cdots a_kt^{m(k)}\in A*\<t\>$$
where the $a_i$ are non-trivial elements of $A$,each $m(i) \neq 0$ and
$k\le 4$.  Then the natural maps
$A\to G$ and $\<t\>\to G$ are injective.
\end{cor}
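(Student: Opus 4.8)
The plan is to obtain Corollary \ref{c1} as an immediate specialisation of Theorem \ref{main}, taking the second free factor to be the infinite cyclic group $B := \<t\>$. Since $B \cong \Z$ is torsion-free and $A$ is torsion-free by hypothesis, both factors satisfy the torsion-freeness requirement of the theorem, so essentially all of the content of the corollary is already carried by Theorem \ref{main}. The only task is to check that the corollary's word $w$ is a genuine instance of the theorem's word, with all syllables non-trivial.

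First I would set $b_i := t^{m(i)} \in B$ for each $i$, so that $w = a_1 t^{m(1)} \cdots a_k t^{m(k)}$ is rewritten as $w = a_1 b_1 \cdots a_k b_k \in A * B$. By hypothesis each $a_i$ is a non-trivial element of $A$. For the $b_i$, I would use that $\<t\>$ is infinite cyclic: $t^{m(i)} = 1$ would force $m(i) = 0$, whereas $m(i) \ne 0$ for every $i$ by hypothesis, so each $b_i$ is a non-trivial element of $B$. Hence $w$ has exactly the alternating form $a_1 b_1 \cdots a_k b_k$ required by Theorem \ref{main}, with all $a_i$ and $b_i$ non-trivial and with the same parameter $k \le 4$.

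With the hypotheses verified, I would invoke Theorem \ref{main} directly: each of $A$ and $B = \<t\>$ embeds via the natural map in $G = (A*B)/N(w) = (A*\<t\>)/N(w)$. These are precisely the two natural maps named in the corollary, so both $A \to G$ and $\<t\> \to G$ are injective, which is the desired conclusion.

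There is no substantial mathematical obstacle here, since the real work is done by Theorem \ref{main}; the only point requiring care is the translation of hypotheses, in particular confirming that each power $t^{m(i)}$ is a genuine non-trivial syllable rather than collapsing to the identity. This rests solely on the condition $m(i) \ne 0$ together with the infinite order of $t$ in the free factor $\<t\>$, and once it is noted the corollary follows with no further input.
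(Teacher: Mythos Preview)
Your proposal is correct and matches the paper's own proof, which simply sets $B:=\langle t\rangle$ and invokes Theorem~\ref{main}. You have merely spelled out the routine verification that each $b_i=t^{m(i)}$ is non-trivial, which the paper leaves implicit.
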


The next result specifically addresses the solubility of certain
{\em singular} equations, that is equations $w(t)=1$ in which
the exponent sum of the variable $t$ in the word $w$ is equal to $0$.

\begin{cor}\label{c2}
Let
$$w=a_1t^{m(1)}\cdots a_kt^{m(k)}\in A*\<t\>$$
be a word where the $a_i$ are non-trivial elements of $A$ and each $m(i) \neq 0$ 
such that $\sum_{i=1}^k m(i)=0$ and such that the sequence
of partial sums $\left(\sum_{i=1}^j m(i)\right)_{j=1}^k$ attains
its maximum value at most four times and its minimum value at most four times.  Then the natural map
$$A\to G:=(A*\<t\>)/N(w)$$
is injective.
\end{cor}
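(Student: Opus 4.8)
The plan is to realise $G$ as an HNN extension by means of the Magnus rewriting, and to identify both its base group and its two associated subgroups as one-relator products of torsion-free groups to which Theorem \ref{main} applies. The partial-sum hypothesis enters at exactly one point: it bounds by $4$ the number of syllables of the rewritten relator at the top and at the bottom level, which is precisely the condition $k\le 4$ under which Theorem \ref{main} is available.

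First I would pass to the kernel of the $t$-exponent-sum map. Put $s_0=0$ and $s_j=\sum_{i=1}^{j}m(i)$ for $1\le j\le k$, so that $s_k=0$; since $s_0=s_k=0$ the lists $s_0,\dots,s_{k-1}$ and $s_1,\dots,s_k$ agree as multisets, so the hypothesis says that $s$ attains its maximum $M$ at most four times and its minimum $\mu$ at most four times on the index set $\{0,\dots,k-1\}$; here $\mu\le 0\le M$, and $M-\mu\ge 1$ since $s_1=m(1)\ne 0$. As $w$ has $t$-exponent sum $0$ it lies in the kernel $K$ of the retraction $A*\<t\>\to\Z$ killing $A$; writing $A_n=t^nAt^{-n}$ one has $K=\ast_{n\in\Z}A_n$, and the telescoping identity
\[
w=\Bigl(\prod_{j=1}^{k}t^{s_{j-1}}a_jt^{-s_{j-1}}\Bigr)t^{s_k}=\prod_{j=1}^{k}t^{s_{j-1}}a_jt^{-s_{j-1}}
\]
exhibits $w$ as a cyclically reduced word of syllable length $k$ in $\ast_{n=\mu}^{M}A_n$, its $j$-th syllable lying in $A_{s_{j-1}}$. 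In particular $w$ genuinely involves $A_M$ and $A_\mu$, with exactly $p\le 4$ syllables in $A_M$ and exactly $q\le 4$ in $A_\mu$.

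Now set $G_{[\mu,M]}=(\ast_{n=\mu}^{M}A_n)/N(w)$, and let $C=\ast_{n=\mu}^{M-1}A_n$ and $D=\ast_{n=\mu+1}^{M}A_n$; these are torsion-free, being free products of copies of $A$. Grouping the syllables of $w$ outside $A_M$ into maximal blocks, $w$ is, up to a cyclic permutation (which does not affect $N(w)$), a word $c_1b_1\cdots c_pb_p$ with $b_i\in A_M\setminus\{1\}$ and $c_i\in C\setminus\{1\}$; since $p\le 4$, Theorem \ref{main} applied to $(C*A_M)/N(w)=G_{[\mu,M]}$ shows that the natural map $C\to G_{[\mu,M]}$ is injective, and symmetrically (using the $q\le 4$ syllables in $A_\mu$) so is $D\to G_{[\mu,M]}$. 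By an elementary rewriting, $G$ is obtained from the one-relator product $G_{[\mu,M]}$ by adjoining a stable letter $t$ subject to $tA_nt^{-1}=A_{n+1}$ for $\mu\le n\le M-1$; since $C$ and $D$ embed in $G_{[\mu,M]}$ and are identified by the shift $A_n\mapsto A_{n+1}$, this is a genuine HNN extension, so $G_{[\mu,M]}$ embeds in $G$. Finally $A_0$ is a free factor of $C$ (if $M\ge 1$) or of $D$ (if $M=0$, which forces $\mu\le -1$), hence embeds in $G_{[\mu,M]}$; composing the embeddings, the natural map $A\cong A_0\to G$ is injective.

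The step I expect to need the most care is the ``elementary rewriting'': verifying that forming $G_{[\mu,M]}$, adjoining $t$, and re-expressing everything in the original generators of $A*\<t\>$ recovers $(A*\<t\>)/N(w)$ exactly, and that once $C$ and $D$ are known to be free products of copies of $A$ inside $G_{[\mu,M]}$ the resulting group really is an HNN extension in the sense of Britton's lemma, so that the base group embeds. Everything else is bookkeeping. The one essential input beyond Theorem \ref{main} is the observation that the partial-sum condition forces $p,q\le 4$: it is exactly this that brings the two ``end'' one-relator products $(C*A_M)/N(w)$ and $(A_\mu*D)/N(w)$ within reach of Theorem \ref{main}, and without such a bound one would need a Freiheitssatz for relators of larger syllable length.
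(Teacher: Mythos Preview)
Your argument is correct and follows essentially the same route as the paper: rewrite $w$ via the Magnus embedding as a cyclically reduced word in $\ast_{n=\mu}^{M}A_n$, apply Theorem~\ref{main} to each of the two decompositions $(C*A_M)/N(w)$ and $(A_\mu*D)/N(w)$ using the partial-sum bound to get $p,q\le 4$, and then realise $G$ as an HNN extension of the base $G_{[\mu,M]}$ with associated subgroups $C$ and $D$. Your write-up is somewhat more explicit than the paper's (the telescoping formula, the multiset observation reconciling the index ranges $\{0,\dots,k-1\}$ and $\{1,\dots,k\}$, and the case split on whether $A_0$ sits inside $C$ or $D$), but the structure is identical.
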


Recall that the {\em syllable length} of a cyclically reduced word
$w=a_1t^{m(1)}\cdots a_kt^{m(k)}\in A*\<t\>$ where the $a_i$ are non-trivial elements of $A$ and each $m(i) \neq 0$ is defined to be
$2k$.

\begin{cor}\label{c3}
Any singular equation of syllable-length at most $18$ over a torsion-free group has a solution in an overgroup.
\end{cor}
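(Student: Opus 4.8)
The plan is to derive Corollary~\ref{c3} from Corollary~\ref{c2}, the bridge being an elementary combinatorial bound on how often a zero‑sum sequence of nonzero integers can repeat the maximum or minimum of its partial sums. First I would pass to a normal form. Given a singular equation $w(t)=1$ over a torsion‑free group $A$, replace $w$ by a cyclically reduced conjugate $w_0$; this changes neither the quotient group $(A*\<t\>)/N(w)$ (up to isomorphism) nor the solubility of the equation, and it does not increase the syllable length. If $w_0$ does not involve $t$ then the equation is either trivially soluble ($w_0=1$) or falls outside the scope of the statement; otherwise, after a cyclic permutation, we may write $w_0=a_1t^{m(1)}\cdots a_kt^{m(k)}$ with each $a_i\in A$ nontrivial and each $m(i)\in\Z$ nonzero. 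The syllable‑length hypothesis now reads $2k\le 18$, i.e.\ $k\le 9$, and singularity means $\sum_{i=1}^k m(i)=0$.

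The combinatorial heart of the argument is then the following. Set $\sigma_j=\sum_{i=1}^j m(i)$ for $0\le j\le k$, so $\sigma_0=\sigma_k=0$ and $\sigma_j\ne\sigma_{j-1}$ for all $j$; I claim $(\sigma_j)_{j=1}^k$ attains its maximum value $M$ at most $\lceil(k-1)/2\rceil$ times, and likewise its minimum value $\mu$. Let $S=\{j\in\{1,\dots,k\}:\sigma_j=M\}$. No two elements of $S$ are consecutive integers, since consecutive partial sums differ. Also $M\ge\sigma_k=0$; if $M>0$ then $0,k\notin S$ (as $\sigma_0=\sigma_k=0$), so $S\subseteq\{1,\dots,k-1\}$, while if $M=0$ then $1\notin S$ because $\sigma_1=m(1)\ne 0$, so $S\subseteq\{2,\dots,k\}$. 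In both cases $S$ is a subset of an interval of $k-1$ integers containing no two consecutive elements, hence $|S|\le\lceil(k-1)/2\rceil$; the minimum is handled identically, using $\mu\le 0$. For $k\le 9$ this gives at most $\lceil 8/2\rceil=4$ indices attaining the maximum and at most four attaining the minimum, so the hypotheses of Corollary~\ref{c2} hold.

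It then follows from Corollary~\ref{c2} that $A$ embeds in $G=(A*\<t\>)/N(w)$, and hence, by the equivalence recalled in the Introduction, that $w(t)=1$ has a solution in the overgroup $G$. I expect no genuine obstacle in this derivation: all the hard work lies in Theorem~\ref{main}, and hence in Corollary~\ref{c2}, which we are assuming. The only steps needing any care are the routine bookkeeping in the reduction to normal form — checking that cyclic reduction does not lengthen the word, and disposing of the degenerate cases in which $w_0$ is trivial or avoids $t$ — and pinning down the constant in the combinatorial bound, so that $\lceil(k-1)/2\rceil=4$ holds exactly at $k=9$. This is what makes syllable length $18$ the natural threshold: for $k=10$ a sequence such as $m=(1,-1,1,-1,\dots,1,-1)$ repeats its maximum five times, and Corollary~\ref{c2} no longer applies.
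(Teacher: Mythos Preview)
Your proposal is correct and follows essentially the same route as the paper: both deduce Corollary~\ref{c3} from Corollary~\ref{c2} by showing that when $k\le 9$ the sequence of partial sums attains each extreme value at most four times. The paper phrases this via the (necessarily even) syllable length of $w_0$ in $B_-*A_M$ being at most $k\le 9$, hence at most $8$, whereas you give the equivalent direct combinatorial bound $|S|\le\lceil(k-1)/2\rceil$; the substance is the same.
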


This generalises \cite[Corollary 4]{BH}, which proves the same result for equations of syllable length at most $14$.

The remainder of the paper is structured as follows.  In Section \ref{cors} we prove
Corollaries \ref{c1}, \ref{c2} and \ref{c3}, assuming Theorem \ref{main}.  We then
split the proof of Theorem \ref{main} into two cases: the case where one of the
factor groups $A,B$ is cyclic is dealt with in Section \ref{cyclic}.

In our proofs we rely heavily on the theory of pictures over one-relator products and over relative presentations.  For details of the basic theory and terms used for one-relator
products the reader is referred to \cite{Ho2}; and for relative presentations see \cite{BP} or \cite[Section 3]{BEW}.  In particular \emph{aspherical} will mean aspherical 
in the sense of \cite{BP} and \emph{diagrammatically reducible} in the sense of \cite{BEW}.

\section{Proof of Corollaries}\label{cors} %2

\begin{pfof}{Corollary \ref{c1}}
As mentioned in the Introduction, the proof follows immediately from Theorem \ref{main}
by setting $B:=\<t\>$.
\end{pfof}

\begin{pfof}{Corollary \ref{c2}}
For $n\in\Z$, let $A_n:=t^nAt^{-n}\subset A*\<t\>$.  The normal closure of $A$ in
$A*\<t\>$ is the free product of the $A_n$ for all $n\in\Z$.  Since $w$ has exponent sum
$0$ in $t$, it belongs to this normal closure, and hence can be uniquely written as a word
$w_0\in\ast_{n\in\Z} A_n$.

Let us denote by $\mu,M$ the minimum and maximum respectively of the sequence
of partial sums $\left(\sum_{i=1}^j m(i)\right)_{j=1}^k$.  Then it is routine to check that $w_0$ is a cyclically reduced word in $\ast_{n=\mu}^M A_n$ that contains letters from each of $A_\mu$ and $A_M$.  Write $B_-:=\ast_{n=\mu}^{M-1} A_n$, $B_+:=\ast_{n=\mu+1}^M A_n$
and 
$$H:=\frac{\ast_{n=\mu}^M A_n}{N(w_0)}.$$

Then $H=(B_-*A_M)/N(w_0)=(B_+*A_\mu)/N(w_0)$.  By hypothesis $w_0$ has a cyclically
reduced conjugate in $B_-*A_M$ of syllable-length at most $8$, so each of $B_-,A_M$ embeds
in $H$ via the natural map.  Similarly each of $B_+,A_\mu$ embeds in $H$ via the natural map.

Finally, note that $G$ can be written as an HNN extension of $H$ with stable letter $t$
and associated subgroups $B_-,B_+$.  The result follows.
\end{pfof}

\begin{pfof}{Corollary \ref{c3}}
Using the same notation as in the proof of Corollary \ref{c2}, if $w$ has syllable length
$2k\le 18$ in $A*\<t\>$, then $w_0$ has syllable length at most $k\le 9$ in
$B_-*A_M$ and in $B_+*A_\mu$.  Hence $w_0$ involves at most $4$ letters from $A_\mu$ and at most $4$ from $A_M$; equivalently, the sequence of partial sums in Corollary \ref{c2}
reaches its maximum and its minimum at most $4$ times each.  The result follows from
Corollary \ref{c2}.
\end{pfof}

\section{The Cyclic Factor Case}\label{cyclic} %3

The proof of Theorem \ref{main} will be given in this and the next section.
As usual, we may reduce to the case when $A$ is generated by the $a_i$ and $B$ by the $b_i$ ($1 \leq i \leq 4$), so we assume this throughout without further comment.
Moreover if $k \leq 3$ then the result follows from \cite[Corollary 3]{BH}, so assume from now on that $k=4$.

The element $a_i$ is said to be \emph{isolated} if no $a_k$ belongs to the cyclic subgroup generated by $a_i$ for $k \neq i$; and similarly for $b_j$.  If there is an isolated pair $a_i,b_j$ for some $1 \leq i,j \leq 4$ then Theorem 1 follows from
\cite[Theorem 1]{BH}, and this fact will be used throughout what follows often without explicit comment.

In this section we prove the special case of Theorem 1 in which one of the factor groups $A$, $B$ is cyclic.  If both are cyclic Theorem 1 follows by the classical
Freiheitssatz \cite{Mag} so it can be assumed without any loss that $A$ is not cyclic and
$B = \langle t \rangle$ is infinite cyclic generated by $t$.  Therefore each $b_j$ has the form $t^{m(j)}$ for some $m(j) \in \mathbb{Z} \backslash \{ 0 \}$.
The result has been proved in \cite{Kim} except in the case where all the $m(j)$ have the same sign, so suppose without loss of generality that $m(j) > 0$ for each $j$.
The injectivity of $A \to G$ is then a result from \cite{Lev}, so it suffices to show that
$B \to G$ is injective, that is, $t$ has infinite order in $G$.

If the relative 1-relator presentation
\[
\mathcal{P} \colon G \cong \langle A,t \mid a_1 t^{m(1)} a_2 t^{m(2)} a_3 t^{m(3)}
a_4 t^{m(4)} \rangle
\]
is aspherical then $t$ is known to have infinite order in $G$ \cite{BP}, so we may assume that
$\mathcal{P}$ is not aspherical.  By \cite[Theorem 2]{BH} the result holds (that is, $t$ has infinite order in $G$) unless
\begin{equation}\label{eq1}
a_1 a_2 a_3 a_4 = 1
\end{equation}
in $A$ so we assume that this equation holds.  We separate the proof into three cases.

\medskip\noindent{\bf Case 1:} $a_1 \neq a_2 \neq a_3 \neq a_4 \neq a_1$ in $A$.  The star graph $\Gamma$ of
$\mathcal{P}$ consists of two vertices $t^{\pm 1}$, and $m(1)+m(2)+m(3)+m(4)$ edges from
$t^{-1}$ to $t$.
For a full discussion on star graphs and weight tests the reader is referred to
\cite[Section 2]{BP}, or \cite[Section 3.2]{BEW}.
Four edges are labelled $a_1,a_2,a_3,a_4$ and the remainder are labelled by $1 \in A$.  Define a weight function by assigning weight 1 to the edges labelled $1 \in A$, and weight $\frac{1}{2}$ to the other four edges.  If the four elements $a_j \in A$ are pairwise distinct then the weight function, and therefore
$\mathcal{P}$ \cite[Theorem 2.1]{BP}, is aspherical contrary to hypothesis.  Hence two of the $a_j$ are equal, and by symmetry we may assume that $a_1=a_3$.
Given this, cyclic permutation yields the symmetry $(m(1),m(2),m(3),m(4)) \leftrightarrow (m(3),m(4),m(1),m(2))$; in particular, we can work modulo $a_2 \leftrightarrow a_4$.
Furthermore, using cyclic permutation, inversion and $x \leftrightarrow x^{-1}$ we can in addition use the second symmetry $(m(1),m(2),m(3),m(4)) \leftrightarrow 
(m(2),m(1),m(4),m(3))$.  

%\newpage
\begin{figure}
\begin{center}
\psfig{file=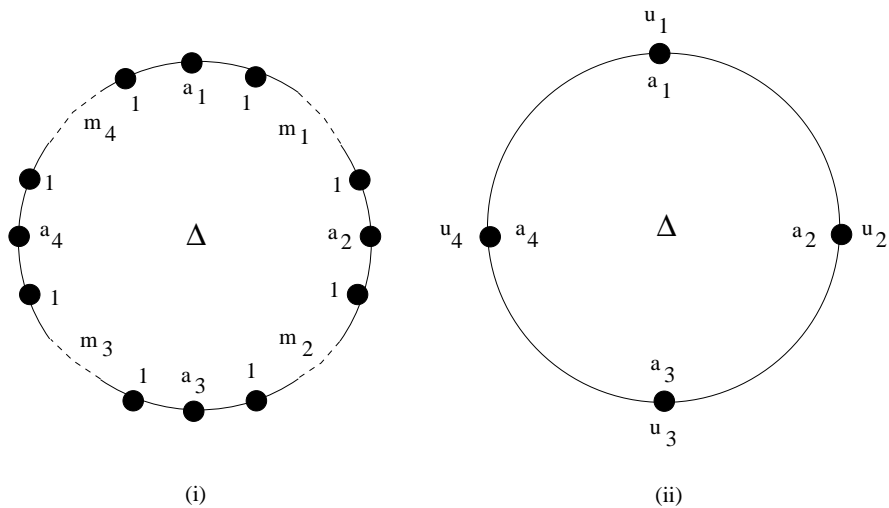}
\end{center}
\caption{the region $\Delta$}
\end{figure}

Now define a new weight function by assigning weight 1 to every edge of $\Gamma$ except for the two edges labelled $a_2$ and $a_4$, which have weight 0.  Since $\mathcal{P}$ is not aspherical there must be an admissable closed path in $\Gamma$ of weight less than 2.  Up to cyclic reordering and inversion, the label of such a path is one of:
(i) $(a_2^{-1} a_4)^m$;
(ii) $x^{-1} a_4 (a_2^{-1} a_4)^m$;
(iii) $a_2^{-1} x (a_2^{-1} a_4)^m$;
(iv) $1^{-1} a_4 (a_2^{-1} a_4)^m$; or
(v) $a_2^{-1} 1 (a_2^{-1} a_4)^m$ for some $m \geq 1$ (where $x$ denotes $a_1$ or $a_3$).
Working modulo $a_2 \leftrightarrow a_4$ it is sufficient to consider only cases (i), (ii) and (iv).

\medskip
In case (i), since $A$ is torsion-free, we have $a_2=a_4$.  Using equation (\ref{eq1}) we obtain $1 = a_1 a_2 a_3 a_4 = (a_1 a_2)^2$ in $A$ and so $a_1 a_2=1$ in $A$,
contradicting $A$ non-cyclic.  

\medskip
Consider now case (ii). If $m(1)=m(3)$ and $m(2)=m(4)$ then $G=\langle H, t | t^{m(2)} a_3 t^{m(3)} s^{-1} = 1 \rangle$, where $H= \langle A, s | s a_2 s a_4 = 1 \rangle$
is obtained from the torsion-free group $A$ by adjunction of a square root, and hence is also torsion-free. The result follows from [4, Theorem 2 (iv)]. It can be assumed then that
either $m(1) \neq m(3)$ or $m(2) \neq m(4)$.
Suppose that $\mathcal{S}$ is a non-empty reduced spherical
picture over $\mathcal{P}$. Contract the boundary of $\mathcal{S}$ to a point which is then deleted and let $\mathcal{D}$ be the dual of $\mathcal{S}$ with labelling inherited from 
$\mathcal{S}$. In particular, since the natural map from $A$ to $G$ is 
injective, each vertex label of $\mathcal{D}$  yields a word trivial in $A$. 
Then $\mathcal{D}$ is a non-empty reduced spherical diagram over
$\mathcal{P}$ whose regions are given (up to cylic permutation and inversion) by $\Delta$ of Figure 3.1(i) and so the label of each region is read in a clockwise direction whereas 
each vertex label is read anti-clockwise. For convenience we depict $\Delta$ as shown in Figure 3.1(ii). Assign angles to the corners 
of each region $\Delta$ of $\mathcal{D}$ as follows: each corner of a vertex in $\Delta$ of degree $d$ is given an angle $2\pi/d$. This way the vertices each have zero curvature; and 
if $\Delta$ has $k$ vertices $v_i$ of degree $d_i > 2$ ($1 \leq i \leq k$), that is, deg$(\Delta) = k$, then the curvature $c(\Delta)$ of $\Delta$ is given by 
\begin{equation}\label{eq2}
c(\Delta)=(2-k) \pi + 2 \pi \sum_{i=1}^k \frac{1}{d_i}
\end{equation}
which we sometimes denote by $c(d_1,\ldots,d_k)$. It follows that the sum of the curvatures of the regions of $\mathcal{D}$ is $4\pi$ (see \cite[Section 3.3]{BEW}) and it is this we 
seek to contradict. Now the 
degree of each vertex $v$ in $\mathcal{D}$ is even since the label $l(v)$ corresponds to a reduced closed path in the star graph $\Gamma$ and so if $c(\Delta) > 0$ then $d(\Delta)<4$;
and indeed the fact that $m(1)=m(3)$ and $m(2)=m(4)$ is together disallowed prevents $d(\Delta)=2$ and so forces $d(\Delta)=3$. Assume that $\mathcal{D}$ is maximal with respect to 
number of vertices of degree 2.
Then the following labels (up to cyclic permutation and inversion) for a vertex of degree 4 are disallowed: $a_1 a_3^{-1} 1 1^{-1}$; $a_1 1^{-1} 1 a_3^{-1}$; $a_1 a_3^{-1} a_1 
a_3^{-1}$; and $1 1^{-1} 1 1^{-1}$ where it is
understood that different edges are used for $1 1^{-1}$ or $1^{-1} 1$. This is because in each case there is a bridge move that would create two vertices of degree 2 but destroy at 
most one, contradicting maximality. Given this, the fact that $A=<a_1,a_4>$ is torsion-free and non-cylic, each vertex label is a word trivial in 
A and that the relation for case (ii) implies $a_1^{2}=(a_1 a_4)^{2m+1}$, an
inspection of the closed paths of length 4 in $\Gamma$ shows that if $d(v)=4$ then (up to cyclic permutation and inversion) $l(v) \in \{ a_1 a_4^{-1} a_2 a_4^{-1}, a_3 a_4^{-1} a_2 
a_4^{-1}, a_1 a_2^{-1} a_4 a_2^{-1}, a_3 a_2^{-1} a_4 a_2^{-1} \}$. Now $a_1 a_4^{-1} a_2 a_4^{-1}=1$ implies $a_1^{2}=(a_1 a_4)^{3}=(a_2 a_1)^{-3}$; and $a_1 a_2^{-1} a_4 a_2^{-1}=1$
implies $a_1^{2}=(a_1 a_2)^{3}=(a_4 a_1)^{-3}$ so these labels cannot both occur. Applying the symmetry $a_2 \leftrightarrow a_4$ it is enough to consider only $l(v) \in \{a_1 a_4^{-1} 
a_2 a_4^{-1}, a_3 a_4^{-1} a_2 a_4^{-1} \}$. Note also that, as shown in case (i), $a_2 \neq a_4$ and so if $d(v)=2$ then $l(v) \in \{ a_1 a_3^{-1}, 1 1^{-1} \}$. If $m_1 \neq m_3$ and
$m_2 \neq m_4$ it is easily shown that if $d(\Delta)=3$ then $\Delta$ has no vertices of degree 4 and so $c(\Delta) \leq 0$ (we omit the details). Applying the second symmetry 
mentioned earlier it is sufficient therefore to consider the two cases $m_1 = m_3, m_2 < m_4$ and $m_1 = m_3, m_2 > m_4$. 

In what follows, in order to deal with regions of positive curvature we use \textit{curvature distribution}.
Briefly, we locate all positive regions $\Delta$ and add $c(\Delta) > 0$ to
$c(\hat{\Delta})$ where $\hat{\Delta}$ is some suitably chosen neighbouring region.  Having done this for each $\Delta$, let $c^{\ast}(\hat{\Delta})$ denote
$c(\hat{\Delta})$ plus all possible additions of $c(\Delta)$.  If
$c^{\ast}(\hat{\Delta}) \leq 0$ for each region $\hat{\Delta}$ then the total $4 \pi$ cannot be attained which is the contradiction we require.

First assume that $m(1) = m(3)$ and $m(2) < m(4)$. If $d(u_1)=2$ in Figure 3.1(i) then $m(2) < m(4)$ forces a ($u_4$,$u_1$)-split, that is, a vertex of degree $> 2$ between $u_4$ and 
$u_1$; and 
so $c(\Delta) > 0$ implies $\Delta$ is given by Figure 3.2(i) in which  $m(2) < m(4)$ and no ($u_2$,$u_3$)-split forces $d(u_2) \geq 6$ and $m(2) < m(4)$ forces the 
($u_4$,$u_1$)-split in $\hat\Delta$ at some vertex $u$ with $d(u) \geq 6$. Distribute $c(\Delta) 
\leq c(4,6,6) = \pi/6$ to 
$c(\hat{\Delta}) \leq c(4,4,6,6) = -\pi/3$ as indicated. If $d(u_1) \geq 6$ and $c(\Delta) > 0$ then $\Delta$ is given by Figure 3.2(ii) 
%\newpage
\begin{figure}
\begin{center}
\psfig{file=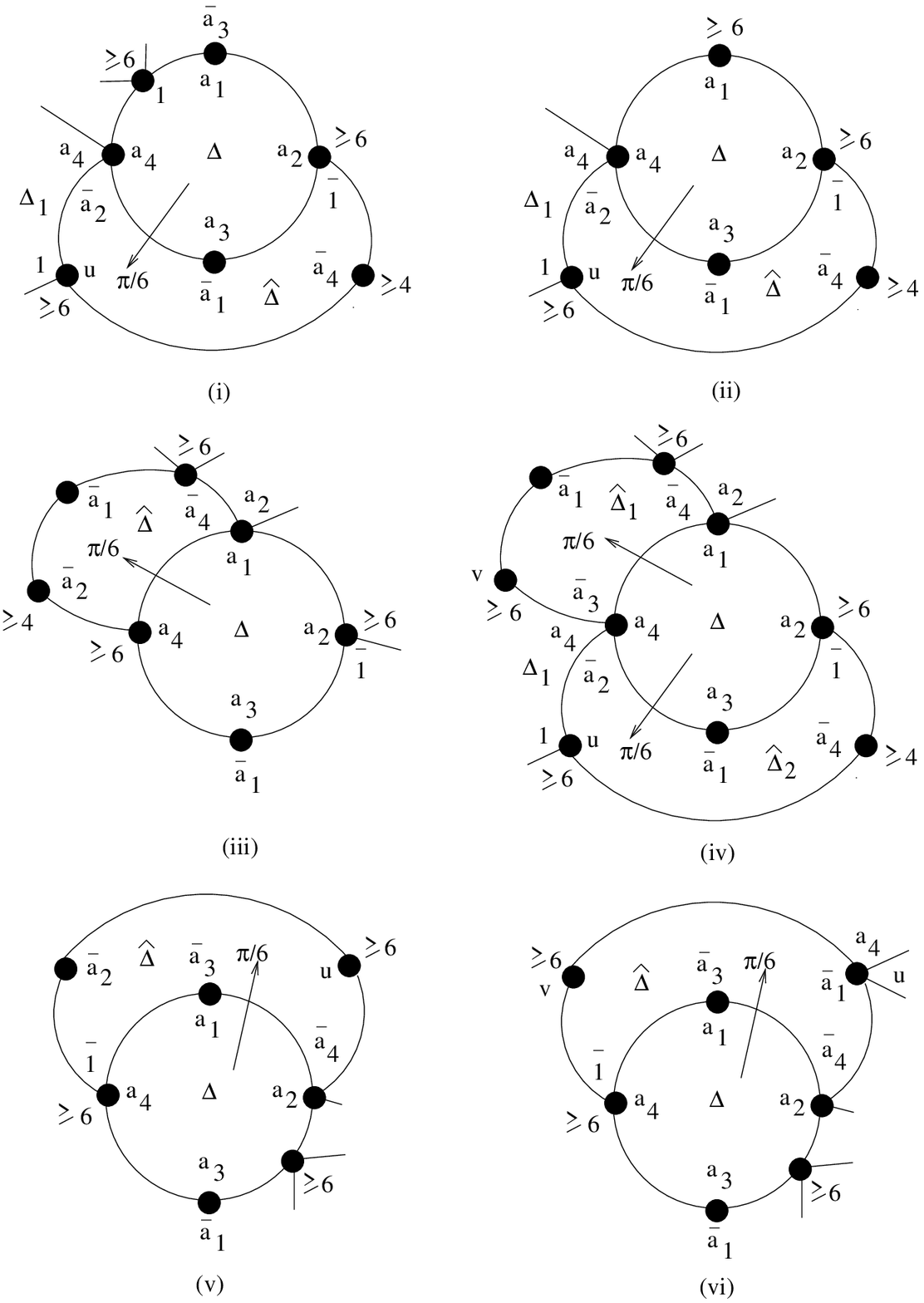}
\end{center}
\caption{positive $\Delta$ and distribution of curvature}                
\end{figure}
and again distribute $c(\Delta) \leq \pi/6$ to 
$c(\hat{\Delta}) 
\leq -\pi/3$ as shown. If $d(u_1)=4$ and $d(u_4) \geq 6$ then $c(\Delta) > 0$ implies $\Delta$ is given by Figure 3.2(iii) in which $m(2) < m(4)$ forces a split in $\hat{\Delta}$ at $u$ with
$d(u) \geq 6$. Distribute $c(\Delta) \leq \pi/6$ to $c(\hat{\Delta})\leq -\pi/3$ as shown. Finally if $d(u_1)=d(u_4)=4$ then $\Delta$ is given by Figure 3.2(iv) in which
$d(u_4)=4$ forces $d(v) \geq 6$. 
Distribute $\frac12  c(\Delta) \leq \frac12 c(4,4,6) = \pi/6$ to each of $c(\hat{\Delta}_i)\leq -\pi/3$ where $i=1,2$ as shown. This completes the distribution rules from regions of 
$\mathcal{D}$ of positive curvature. But now observe from the figures that if $\hat{\Delta}$ receives any positive curvature then $c(\hat{\Delta})\leq -\pi/3$ and $\hat{\Delta}$ 
receives at most $\pi/6$ from each of at most two neighbouring regions. It follows that $c^{\ast}(\hat{\Delta}) \leq 0$ as required.  

Now assume that $m(1) = m(3)$ and $m(2) > m(4)$. Let $d(u_1)=d(u_3)=2$ in Figure 3.1(i). Then $m(2) > m(4)$ forces a ($u_2$,$u_3$)-split and so if $c(\Delta) > 0$ then $\Delta$ is 
given by 
Figure 3.2(v)  
in which $m(2) > m(4)$ forces $d(u_4) \geq 6$. If $d(u) \geq 6$ in Figure 3.2(v) then distribute $c(\Delta) \leq \pi/6$ to $c(\hat{\Delta}_i)\leq -\pi/3$ as shown. On the other hand if 
$d(u)=4$ 
then $\Delta$ and $\hat{\Delta}$ are given by Figure 3.2(vi) in which the labelling forces $d(v) \geq 6$ and again distribute $c(\Delta) \leq \pi/6$ to $c(\hat{\Delta}_i)\leq -\pi/3$. 
If $d(u_1)>2$ and $d(u_3)=2$ in Figure 
3.1(i) then $d(\Delta)>3$.
Finally let let  $d(u_1)=2$ and $d(u_3)>2$. If any vertex other than $u_2,u_3,u_4$ has degree $> 2$ then $c(\Delta) < 0$, so assume otherwise. Then, as above,  $d(u_1)=2$ forces $d(u_4) \geq 
6$. If  $d(u_2)=4$ then $m(2) > m(4)$ forces a ($u_2$,$u_3$)-split so let $d(u_2) \geq 6$. Thus if $c(\Delta) > 0$ we must have $d(u_3)=4$.  
But then no ($u_2$,$u_3$)-split forces $m(3) \geq m(2)$ and no ($u_3$,$u_4$)-split forces $m(4) \geq m(3)$, a contradiction. This completes the 
distribution 
rules and as in the previous subcase we have $c^{\ast}(\hat{\Delta}) \leq 0$ as required.     
  
\medskip
In case (iv), the subgroup of $A$ generated by $a_2$ and $a_4$ is cyclic, generated by $a_2^{-1} a_4$.
Using equation (\ref{eq1}) we obtain $a_2^{-1} a_4 = (a_3 a_4)^2$, so the subgroup of $A$ generated by $a_3 a_4$ contains $a_2$ and $a_4$, hence also $a_3$ and $a_1$ forcing $A$ to be 
cyclic, a contradiction.

\medskip\noindent{\bf Case 2:} $a_1=a_2$ in $A$ and $b_1=b_2$ in $B$, that is, $m(1)=m(2)$. If $a_3=a_4$ then $a_1^2a_3^2=1$ by (3.1). Thus $A$ is a torsion-free homomorphic image of the Klein bottle
group, hence is locally indicable and the result then follows \cite{Ho}. So assume from now on that $a_3 \neq a_4$.

Assign weight $1$ to each edge of the star graph $\Gamma$ of $\mathcal{P}$ except for the edges labelled $a_3$ and $a_4$, which are given weight $0$. Since $\mathcal{P}$ is not 
aspherical there must be an admissable closed path in $\Gamma$ of weight less than $2$. Up to cyclic permutation and inversion the label of such a path is one of:  
(i) $(a_3^{-1} a_4)^m$;
(ii) $x^{-1} a_4 (a_3^{-1} a_4)^m$;
(iii) $a_3^{-1} x (a_3^{-1} a_4)^m$;
(iv) $1^{-1} a_4 (a_3^{-1} a_4)^m$; or
(v) $a_3^{-1} 1 (a_3^{-1} a_4)^m$ for some $m \geq 1$ where $x$ denotes $a_1$ or $a_2$.
But in case (i), since $A$ is torsion-free, we obtain $a_3=a_4$, a contradiction. The curvature arguments required for the remaining four cases are similar to those of Case 1 and so
we will omit some of the detail. 

As before, let $\mathcal{D}$ be a non-empty reduced spherical diagram over $\mathcal{P}$ whose regions are given by Figure 3.1(i)-(ii); and the same assignation of curvature to the 
corners of each region $\Delta$ is used. We once more make the assumption that $\mathcal{D}$ is maximal with respect to number of vertices of degree 2; and, subject to this, we make 
the additional assumption that the number of vertices of degree 4 with label $11^{-1}11^{-1}$ is minimal. With these assumptions  an argument using bridge moves shows that in fact we  
may disallow the vertex labels $a_1a_2^{-1}a_1a_2^{-1}$ and $11^{-1}11^{-1}$. In order to check the possible labels for a vertex of degree 4 we make use of the 
following observations which appeal, in particular, to (3.1): if $a_1a_3^{-1}a_4=1$ then $A = \langle a_3 \rangle$ is cyclic; if $a_1a_4^{-1}a_3^{-1}=1$ then $(a_3a_4)^3=1$ and it 
follows that $a_1=1$; if 
$a_1a_3a_4^{-1}=1$ then $A = \langle a_4 \rangle$ is cyclic; if $a_1a_3^{-1}a_1a_4^{-1}=1$ then $A = \langle a_1,a_3 \rangle$ where $a_3a_1a_3^{-1}=a_1^{-3}$ and so $A$ is a 
torsion-free homomorphic image of the Baumslag-Solitar group $BS(1,-3)$, hence is locally indicable and the result follows \cite{Ho}; or if $a_1a_3^{-1}a_4^{-1}=1$ then $A = \langle 
a_1,a_3 \rangle$ where $a_3a_1a_3^{-1}=a_1^{-2}$ and so $A$ is homomorphic image of the Baumslag-Solitar group $BS(1,-2)$ and similarly the result follows.
Given these observations together with $A$ torsion-free and non-cyclic it is readily verified that if $d(v)=4$ then (up to cyclic permutation and inversion) $l(v) \in 
\{a_1a_2^{-1}11^{-1}, xa_3^{-1}a_4a_3^{-1}, xa_4^{-1}a_3a_4^{-1}, xa_4^{-1}a_31^{-1}, x1^{-1}a_4a_3^{-1}, a_11^{-1}1a_2^{-1},\\ a_3a_4^{-1}a_31^{-1}, a_3a_4^{-1}1a_4^{-1}\}$ where 
$x=a_1$ or $a_2$.  

\medskip
Consider case (ii) and so $a_1^{-1}a_4(a_3^{-1}a_4)^m=1$. If $a_1a_3^{-1}a_4a_3^{-1}=1$ then $(a_3^{-1}a_4)^{m+2}=1$ and so $a_3=a_4$; if $a_1a_4^{-1}a_3=1$ then $A=\langle a_1 
\rangle$ is cyclic; if $a_1a_4a_3^{-1}=1$ then $A=\langle a_3^{-1}a_4 \rangle$ is cyclic; or if $a_3^2a_4^{-1}=1$ or $a_3a_4^{-2}=1$ then $A$ is cyclic. Therefore if $d(v)=4$ then 
$l(v) \in \{a_1a_2^{-1}11^{-1}, a_1a_4^{-1}a_3a_4^{-1} (m=1), a_2a_4^{-1}a_3a_4^{-1} (m=1), a_11^{-1}1a_2^{-1}\}$. Suppose that $m >1$. Then any vertex involving $a_3$ or $a_4$ has degree at 
least 6 and it is a routine check that $c(\Delta) \leq 0$ for each region $\Delta$. Let $m=1$. If $m(1)=m(2)=m(3)=m(4)$ then since $B=\langle t \rangle = \langle b_1,b_2,b_3,b_4 
\rangle$ it follows that $m(i)=1 (1 \leq i \leq 4)$. But then $a_1^{-1}a_4a_3^{-1}a_4=1$ implies that the subgroup of $A$ generated by loops in $\Gamma$ is cyclic (indeed generated by 
$a_1a_4^{-1}$) and the result follows; so assume otherwise. Checking then shows that if $c(\Delta)>0$ then either $m(1)=m(4)>m(3)$ and $\Delta$ is given by Figure 3.3(i)-(iii); or 
$m(1)=m(4)<m(3)$ and $\Delta$ is given by Figure 3.3(iv); or $m(3)=m(4)>m(1)$ and $\Delta$ is given by Figure 3.3(v).  
Distribute $c(\Delta) \leq \pi/6$ to $c(\hat{\Delta}) \leq -\pi/6$ as shown in Figure 3.3. If $\hat{\Delta}$ receives positive curvature across exactly one edge we can conclude 
that $c^{\ast}(\hat{\Delta}) \leq 0$. This is certainly true for the last two cases and if $m(1)=m(4)>m(3)$ again we can see it holds since in Figure 
3.3(i)-(iii) the curvature is distributed across the same $(1,a_3)$-edge.

\medskip
Consider case (iii) and so $a_3^{-1}a_1(a_3^{-1}a_4)^m=1$ and $A= \langle a_3,a_4 \rangle$. If $a_1a_4^{-1}a_3a_4^{-1}=1$ then $a_3=a_4$; if $a_1a_4^{-1}a_3=1$ or $a_1a_4a_3^{-1}=1$ 
then $A=\langle a_3^{-1}a_4 \rangle$ is cyclic; or if $a_3^2a_4^{-1}=1$ or $a_3a_4^{-2}=1$ then A is cyclic. Therefore if $d(v)=4$ then $l(v) \in \{a_1a_2^{-1}11^{-1}, 
a_1a_3^{-1}a_4a_3^{-1} (m=1), a_2a_3^{-1}a_4a_3^{-1} (m=1), a_11^{-1}1a_2^{-1}\}$. If $m>1$ then again $c(\Delta) \leq 0$ for each region $\Delta$ so let $m=1$. If 
$m(1)=m(2)=m(3)=m(4)$ then as in case (ii) the subgroup of $A$ generated by loops in $\Gamma$ is cyclic (generated 
%\newpage
\begin{figure}
\begin{center}
\psfig{file=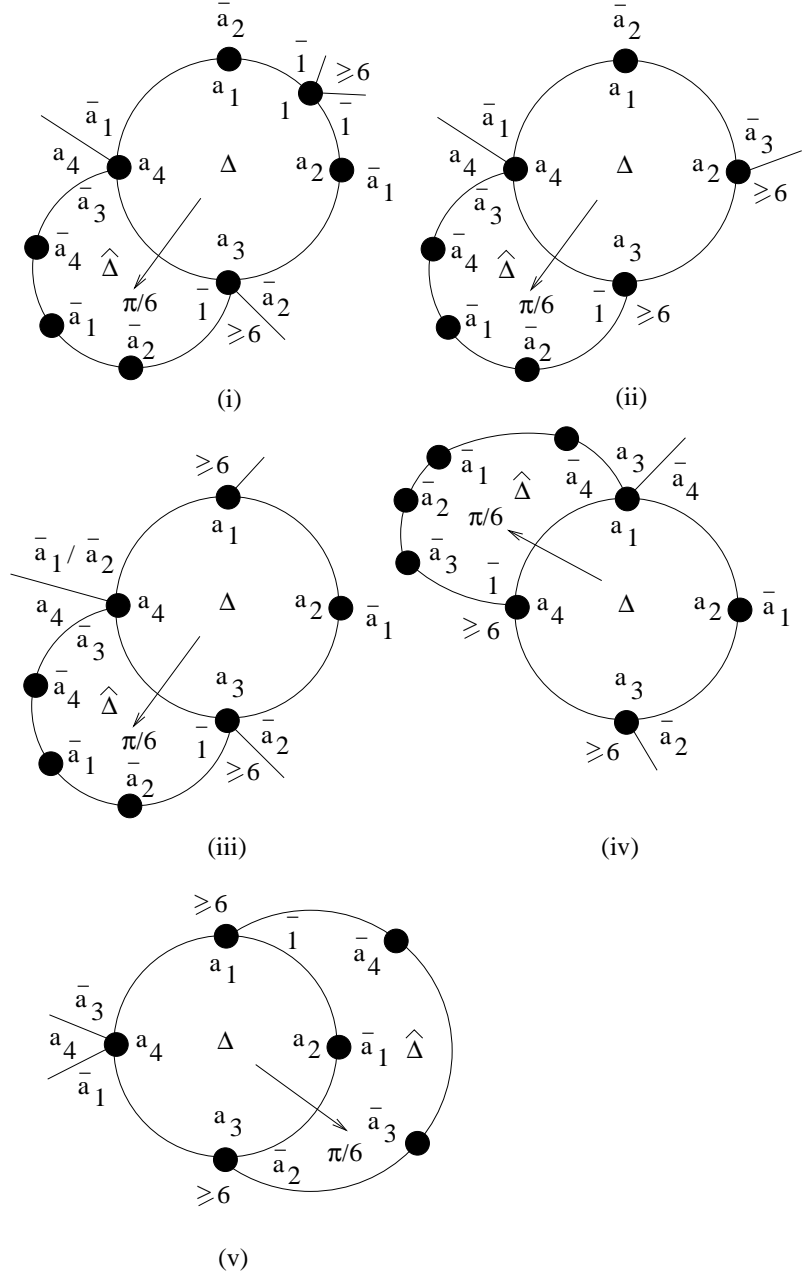}
\end{center}
\caption{positive $\Delta$ and distribution of curvature}
\end{figure}
%\newpage
\begin{figure}
\begin{center}
\psfig{file=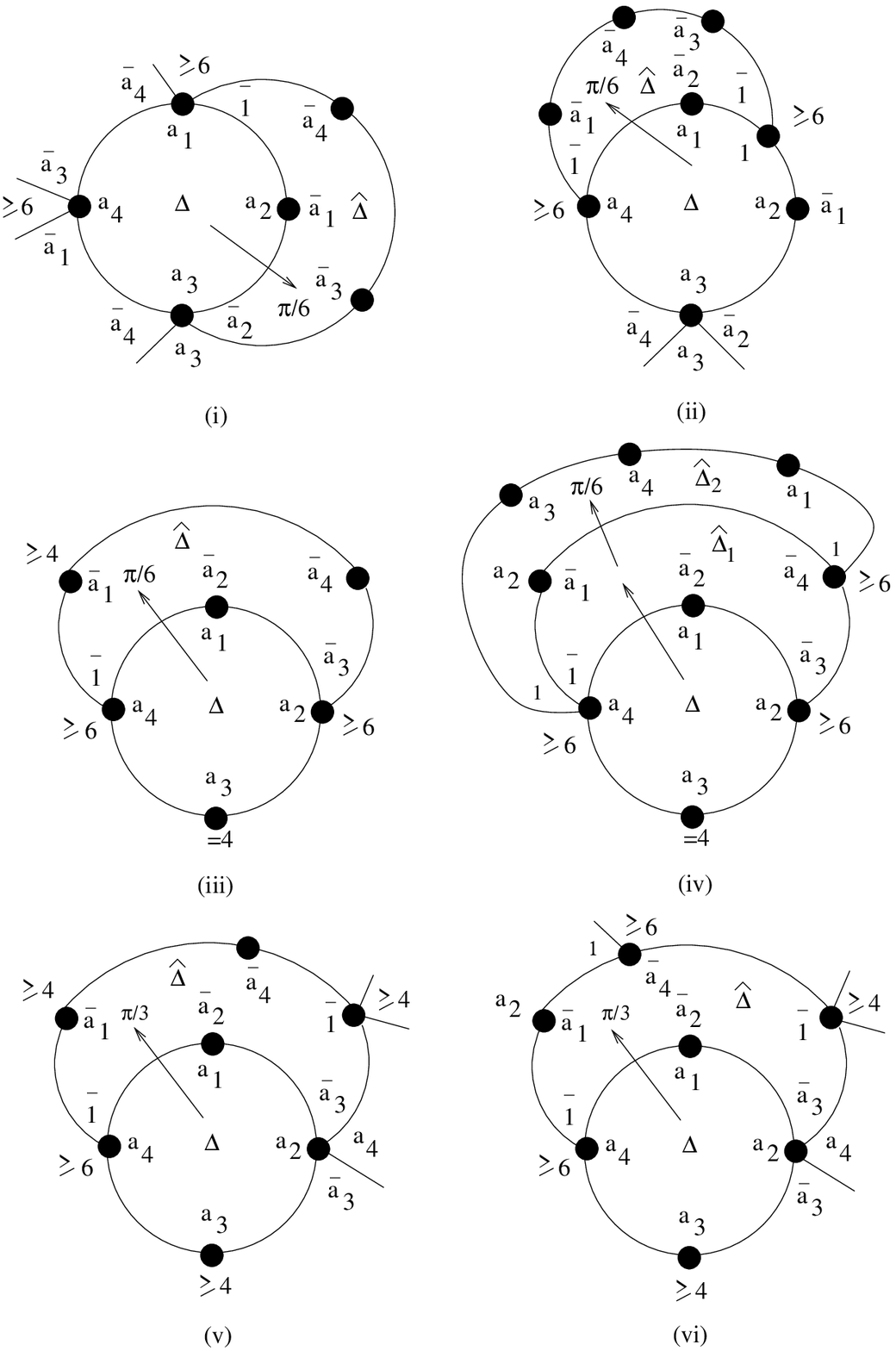}
\end{center}
\caption{positive $\Delta$ and distribution of curvature}
\end{figure}
by $a_1a_3^{-1}$), so asume otherwise. The case $m(1)=m(4)$ is 
symmetric to case (ii). If $m(1)<m(4)$ there is only one $\Delta$ for which $c(\Delta)>0$ and $\Delta$ is given by Figure 3.4(i) in which $c(\Delta) \leq \pi/6$ is added to 
$c(\hat{\Delta}) \leq -\pi/6$ as shown, so $c^{\ast}(\hat{\Delta}) \leq 0$. This leaves $m(1)>m(4)$. Checking now shows that if $c(\Delta)>0$ then $\Delta$ is given by Figure 
3.4(ii)-(vi). In Figure 3.4(ii) and (iii) $c(\Delta) \leq \pi/6$ is added to $c(\hat{\Delta}) \leq -\pi/6$ as shown; and in Figure 3.4(v) and (vi) $c(\Delta) \leq \pi/3$ is added to
$c(\hat{\Delta}) \leq -\pi/3$ as shown. In Figure 3.4(iv) however, $c(\Delta) \leq \pi/6$ is distributed to $c(\hat{\Delta}_2) \leq -\pi/3$ via $c(\hat{\Delta}_1) \leq 0$.
The key point once again is that in Figure 3.4 curvature is distributed across the same $(1,a_2)$-edge each time and it follows that $c^{\ast}(\hat{\Delta}) \leq 0$. 

\medskip
Consider case(iv). Then $a_4(a_3^{-1}a_4)^m=1$ and so, in particular, $\langle a_3,a_4 \rangle = \langle a_3^{-1}a_4 \rangle$ is cyclic. It quickly follows that if $d(v)=4$ then 
$l(v) \in \{a_1a_2^{-1}11^{-1}, a_3a_4^{-1}1a_4^{-1} (m=1), a_11^{-1}1a_2^{-1}\}$. If $m>1$ then, as before, there are no regions having positive curvature, so let $m=1$. Note also 
that if $m(1)=m(2)=m(3)=m(4)=1$ there are no vertices of degree 4 which implies $c(\Delta) \leq 0$, so we can assume otherwise. Checking then shows that if $c(\Delta)>0$ then $\Delta$ 
is given by Figure 3.5(i) or (ii) 
and in each case $c(\Delta) \leq \pi/6$ is added to $c(\hat{\Delta}) \leq -\pi/6$ as shown across the same $(1,a_2)$-edge and it follows that $c^{\ast}(\hat{\Delta}) \leq 0$.

\medskip
Finally consider case (v). Then $a_3^{-1}(a_3^{-1}a_4)^m=1$ and $\langle a_3,a_4 \rangle = \langle a_3^{-1}a_4 \rangle$ is cyclic. Therefore if $d(v)=4$ then 
$l(v) \in \{a_1a_2^{-1}11^{-1},\\ a_3a_4^{-1}a_31^{-1} (m=1), a_11^{-1}1a_2^{-1}\}$. As in case (iv) it can be assumed that $m=1$ and that $m(1)=m(2)=m(3)=m(4)=1$ does not hold. But 
checking now shows that $c(\Delta) \leq 0$ for each region $\Delta$ and we are done.  
 
\medskip\noindent{\bf Case 3:} $a_1 = a_2$ in $A$ and $b_4 \neq b_1 \neq b_2$ in $B$.  We need a few preliminary results.

\begin{lem}\label{lem3.1}%3.1
If none of the $a_j$ is isolated then $t$ has infinite order.
\end{lem}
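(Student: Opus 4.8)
The plan is to follow the same template that worked in Cases~1 and~2, namely: produce a non-empty reduced spherical diagram $\mathcal{D}$ over $\mathcal{P}$, show that its regions all have the form $\Delta$ of Figure~3.1, equip each corner of a degree-$d$ vertex with angle $2\pi/d$ so that vertices carry zero curvature and the region curvatures sum to $4\pi$ by \eqref{eq2}, and then derive a contradiction by showing $c^*(\hat\Delta)\le 0$ for every region after a suitable curvature distribution. So first I would exploit the hypothesis that no $a_j$ is isolated: by the remarks preceding Lemma~\ref{lem3.1}, if any pair $a_i,b_j$ were isolated the result would follow from \cite[Theorem 1]{BH}, so we may assume no $b_j$ is isolated either. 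Since $B=\langle t\rangle=\langle b_1,b_2,b_3,b_4\rangle$ with $b_j=t^{m(j)}$ and $m(j)>0$, the non-isolation of the $b_j$ translates into divisibility constraints among the positive integers $m(1),m(2),m(3),m(4)$; combined with $a_1=a_2$, $b_1\ne b_2$ (so $m(1)\ne m(2)$), $b_4\ne b_1$ and $b_1\ne b_2$, this should pin the exponent pattern down to a short list of cases.

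Next I would analyse the star graph $\Gamma$ as in the earlier cases. With $a_1=a_2$ fixed and $a_1a_2a_3a_4=1$ from \eqref{eq1} giving $a_1^2a_3a_4=1$, I would choose the weight function putting weight $0$ on the edges carrying the ``repeated'' letters and weight $1$ elsewhere, so that non-asphericity forces the existence of a short admissible closed path in $\Gamma$; up to cyclic permutation and inversion this yields the same five shapes (i)--(v) as before, with (i) immediately giving $a_3=a_4$ (hence, via $a_1^2a_3^2=1$, a locally indicable $A$ and the result by \cite{Ho}) or some other collapse of $A$ to a cyclic or Baumslag--Solitar quotient. For each surviving shape I would enumerate the possible labels of a degree-$4$ vertex of $\mathcal{D}$, using the relation coming from that shape together with \eqref{eq1} and the standing hypotheses that $A$ is torsion-free and non-cyclic, discarding any label that forces $A$ cyclic, forces $a_1=1$, or makes $A$ locally indicable. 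As before I would assume $\mathcal{D}$ maximal with respect to the number of degree-$2$ vertices (and, where needed, minimal with respect to bad degree-$4$ labels such as $11^{-1}11^{-1}$), so that bridge moves eliminate further labels.

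Then comes the curvature bookkeeping. Since every vertex of $\mathcal{D}$ has even degree (its label is a reduced closed path in $\Gamma$), $c(\Delta)>0$ forces $d(\Delta)\le 3$, and the disallowed degree-$2$ labels force $d(\Delta)=3$ in the positive case, so every positive region is a triangle $c(4,4,d)$ or $c(4,d_1,d_2)$ with at most one vertex of degree $\le 4$. For each such configuration I would read off from the word-in-$A$ condition on the neighbouring vertex labels — and from the inequalities among the $m(j)$ forcing ``splits'' (vertices of degree $>2$) between consecutive corners, exactly as in Figures~3.2--3.5 — that the designated neighbour $\hat\Delta$ has $c(\hat\Delta)\le -\pi/3$ (or $\le -\pi/6$) and receives at most the matching amount, across a fixed type of edge so that no $\hat\Delta$ is overloaded; hence $c^*(\hat\Delta)\le 0$ throughout, contradicting the total $4\pi$. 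The main obstacle I expect is precisely this last case analysis: because here $b_1\ne b_2$ (unlike Case~2) the symmetries available are weaker, so the list of exponent patterns and of admissible degree-$4$ labels is longer, and I anticipate needing several sub-cases (organised by which of the $m(j)$ are equal and which inequalities hold) each with its own figure and distribution scheme — the routine but delicate part being to verify that in every sub-case some consistent choice of recipient region absorbs all the positive curvature without exceeding its deficit.
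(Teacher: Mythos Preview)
Your proposal misses the point of the lemma entirely. The hypothesis ``none of the $a_j$ is isolated'' is not a mild side assumption to be carried through a long curvature argument --- in the Case~3 setting ($a_1=a_2$, $A$ torsion-free and non-cyclic) it is a very strong algebraic constraint that by itself forces $A$ to be locally indicable, after which the conclusion follows immediately from \cite{Ho}. The paper's proof is about five lines: since $a_3$ is not isolated, some other $a_k$ lies in $\langle a_3\rangle$; if that $a_k$ is $a_1=a_2$ then $A$ is cyclic by \eqref{eq1}, so $a_4=a_3^m$. Symmetrically $a_3=a_4^n$, whence $a_3^{mn-1}=1$ and torsion-freeness gives $m=\pm1$. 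The case $m=-1$ makes $a_1^2=1$ via \eqref{eq1}, so $m=1$, $a_3=a_4$, and \eqref{eq1} becomes $a_1^2a_3^2=1$; thus $A$ is a torsion-free quotient of the Klein bottle group, hence locally indicable, and \cite{Ho} finishes.

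Beyond missing this short route, your plan has an actual error at the outset: you argue that since an isolated pair $a_i,b_j$ would give the result, you may assume no $b_j$ is isolated. But the standing hypothesis is that \emph{no} $a_j$ is isolated, so no isolated pair exists regardless of what happens on the $B$-side; your reduction is unjustified. And even granting that, what you sketch is a programme, not a proof --- you explicitly flag the final case analysis as an anticipated obstacle. The correct move is to exploit the non-isolation hypothesis \emph{algebraically} on the $A$-side, not to feed it into another curvature-distribution argument.
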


\begin{proof}
If $a_3$ is not isolated, then either $a_1=a_2$ is a power of $a_3$, or $a_4$ is a power of $a_3$.  In the first case $A$ is cyclic by (\ref{eq1}), contrary to hypothesis.  Hence $a_4=a_3^m$ for some $m$.  Similarly $a_3=a_4^n$ for some $n$.
Hence $a_3^{mn-1}=1$ which forces $m=\pm 1$.  But if $m=-1$ then (\ref{eq1}) gives
$a_1^2=1$, a contradiction.  Hence $m=1$ and (\ref{eq1}) gives $a_1^2 a_3^2=1$.
As in Case 2 above, $A$ is then locally indicable and the result follows.
\end{proof}

\begin{lem}\label{lem3.2}%3.2
If $t$ does not have infinite order, then one of the following holds:
(i) the subgroup of $A$ generated by $a_3,a_4$ is cyclic; or (ii) $a_3^{-1} a_1$ is a power of $a_3^{-1} a_4$.
\end{lem}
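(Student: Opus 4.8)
The plan is to mimic the structure used in Cases 1 and 2, specialising the star-graph/weight-test analysis to the present situation where $a_1=a_2$ but now $m(1)=m(2)$ no longer holds because $b_1,b_2$ are distinct syllables of $B$ (recall we are no longer in the cyclic-factor case, so $B$ need not be $\langle t\rangle$). First I would recall that we are under the standing hypotheses of Case 3: $k=4$, $A=\langle a_1,a_2,a_3,a_4\rangle$ and $B=\langle b_1,b_2,b_3,b_4\rangle$ are torsion-free, no isolated pair $a_i,b_j$ exists (else \cite[Theorem 1]{BH} applies), equation~(\ref{eq1}) $a_1a_2a_3a_4=1$ holds, $a_1=a_2$, and $b_4\neq b_1\neq b_2$. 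By Lemma~\ref{lem3.1} we may assume some $a_j$ is isolated; since $a_1=a_2$ that isolated element must be $a_3$ or $a_4$. If the relative presentation (one-relator product) $\mathcal P$ is aspherical, then $t$ — i.e.\ the relevant factor — has infinite order, so we assume $\mathcal P$ is not aspherical, and hence that the weight test fails.

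The key step is the weight-test analysis. I would assign weight $1$ to each edge of the star graph $\Gamma$ of $\mathcal P$ except the two edges labelled $a_3$ and $a_4$, which receive weight $0$, exactly as in Case 2. Non-asphericity forces an admissible closed path in $\Gamma$ of weight strictly less than $2$, whose label (up to cyclic permutation and inversion) is one of the same five shapes as in Case 2: (i) $(a_3^{-1}a_4)^m$; (ii) $x^{-1}a_4(a_3^{-1}a_4)^m$; (iii) $a_3^{-1}x(a_3^{-1}a_4)^m$; (iv) $1^{-1}a_4(a_3^{-1}a_4)^m$; (v) $a_3^{-1}1(a_3^{-1}a_4)^m$, with $m\geq1$ and $x\in\{a_1,a_2\}=\{a_1\}$. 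In case (i), torsion-freeness of $A$ gives $a_3=a_4$, whence by (\ref{eq1}) $a_1^2a_3^2=1$, so $A$ is a torsion-free image of the Klein bottle group, locally indicable, and $t$ has infinite order by \cite{Ho}; thus this case does not arise under the assumption that $t$ does not have infinite order. In cases (iv) and (v), the relation says $a_4=(a_3^{-1}a_4)^{-m}$ respectively $a_3^{-1}=(a_3^{-1}a_4)^{-m}$, so in each case $\langle a_3,a_4\rangle=\langle a_3^{-1}a_4\rangle$ is cyclic — this is conclusion (i) of the lemma. Cases (ii) and (iii) give $a_1=a_4(a_3^{-1}a_4)^m$ respectively $a_1=a_3(a_3^{-1}a_4)^m$; in case (iii) this literally says $a_3^{-1}a_1=(a_3^{-1}a_4)^m$, which is conclusion (ii). The remaining subcase is (ii): here I would argue that if conclusion (i) fails, i.e.\ $\langle a_3,a_4\rangle$ is non-cyclic, then $a_3^{-1}a_1 = a_3^{-1}a_4(a_3^{-1}a_4)^m = (a_3^{-1}a_4)^{m+1}$, which is again conclusion (ii).

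So the logical skeleton is: not-$t$-infinite-order $\Rightarrow$ $\mathcal P$ not aspherical $\Rightarrow$ weight test fails $\Rightarrow$ one of the five path-labels occurs $\Rightarrow$ (i) is impossible, (iv),(v) give conclusion~(i), (ii),(iii) give conclusion~(ii) (with (ii) falling into (i) or (ii) according to whether $\langle a_3,a_4\rangle$ is cyclic). The step I expect to require the most care is verifying that the list of five admissible-path-label shapes is genuinely exhaustive and correctly normalised — this is the same bookkeeping done in Cases 1 and 2, so I would invoke the discussion of star graphs and the weight test in \cite[Section 2]{BP} (or \cite[Section 3.2]{BEW}) and the symmetry reductions already set up, rather than re-deriving it; and checking, in case (ii) with $m\geq1$, that $a_3^{-1}a_4(a_3^{-1}a_4)^m$ really equals $(a_3^{-1}a_4)^{m+1}$ is immediate, so the only genuine content is the reduction of case~(ii) to the two stated conclusions. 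The elimination of case~(i) via local indicability \cite{Ho}, as in Case~2, is routine.
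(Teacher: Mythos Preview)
Your proof is correct and essentially identical to the paper's: the same weight function (weight $0$ on the edges labelled $a_3,a_4$, weight $1$ elsewhere), the same five admissible-path shapes, and the same disposition of cases --- case (i) eliminated via the Klein bottle group and local indicability, cases (iv),(v) yielding conclusion (i), cases (ii),(iii) yielding conclusion (ii) --- with the paper simply compressing cases (ii)--(v) into the single sentence ``The other four cases each give one of the two conclusions in the statement of the lemma''. One minor contextual slip: you write ``we are no longer in the cyclic-factor case, so $B$ need not be $\langle t\rangle$'', but this lemma still lies in the Cyclic Factor section where $B=\langle t\rangle$ (Case~3 there has $b_4\neq b_1\neq b_2$, i.e.\ $m(4)\neq m(1)\neq m(2)$); this does not affect the validity of your argument.
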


\begin{proof}
As in the proof of Case 1 we construct a putative weight function on the star graph $\Gamma$ of $\mathcal{P}$.  Assign weight 0 to the two edges labelled $a_3$ and $a_4$, and weight 1 to every other edge.  Since $\mathcal{P}$ is not aspherical, there is an admissable path of weight less than 2 and the possible labels are of the form
(i) $(a_3^{-1} a_4)^m$;
(ii) $x^{-1} a_4 (a_3^{-1} a_4)^m$;
(iii) $a_3^{-1}x (a_3^{-1} a_4)^m$;
(iv) $1^{-1} a_4 (a_3^{-1} a_4)^m$; and
(v) $a_3^{-1} 1 (a_3^{-1} a_4)^m$ for some $m \in \mathbb{Z}$, where $x$ denotes $a_1$ or $a_2$.

In case (i), since $A$ is torsion-free, $a_3=a_4$.  Then $a_1^2 a_3^2=1$ by (\ref{eq1}) and as before $A$ is a homomorphic image of the Klein bottle group so is locally indicable hence the result.  The other four cases each give one of the two conclusions in the statement of the lemma.
\end{proof}

The next lemma is well-known, but we include a proof for completeness.

%\newpage
\begin{figure}
\begin{center}
\psfig{file=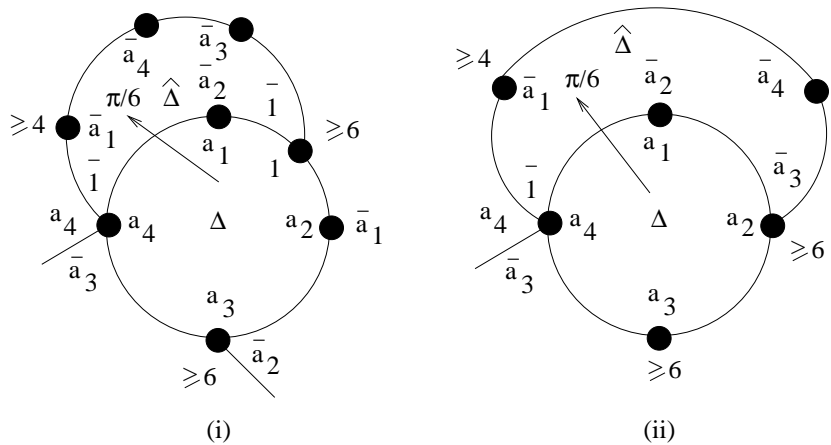}
\end{center}
\caption{positive $\Delta$ and distribution of curvature}
\end{figure}

\begin{lem}\label{lem3.3}%3.3
Let $F$ be a field, $G$ a torsion-free group, and $\alpha \in FG$ an element of the form $ag+bh$, where $g,h$ are distinct elements of $G$ and $a,b$ are non-zero elements of $F$.  Then $\alpha$ is neither a unit nor a zero-divisor in $FG$.
\end{lem}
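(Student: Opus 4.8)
The plan is to reduce the statement to the corresponding fact about group rings of finitely generated torsion-free groups, and then to invoke the two structural inputs that are classically available there. First I would observe that $\alpha = ag + bh = g(a + b g^{-1}h)\cdot\,$(well, $\alpha = (a + b\,hg^{-1})g$, or after multiplying by the unit $a^{-1}g^{-1}$) is associate in $FG$ to an element of the form $1 - c k$ with $c = -b a^{-1}\in F^\times$ and $k = g^{-1}h \neq 1$; since units and zero-divisors are preserved under multiplication by the unit $a^{-1}g^{-1}$, it suffices to treat $\alpha = 1 - ck$ with $k\in G$ of infinite order. Next, everything happening inside $FG$ for a fixed pair of elements $g,h$ takes place in $FH$, where $H = \langle g,h\rangle$ is a finitely generated torsion-free group, and $FH \hookrightarrow FG$; a zero-divisor or unit identity in $FG$ restricts to one in $FH$ (for zero-divisors, if $\alpha\beta = 0$ in $FG$ then already $\alpha\beta = 0$ in $FH\cdot(\text{support of }\beta)$; more cleanly, pass to $\langle g,h,\operatorname{supp}\beta\rangle$). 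So I reduce to: for $H$ a finitely generated torsion-free group and $k\in H$ of infinite order, $1 - ck$ is neither a unit nor a zero-divisor in $FH$.

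For the zero-divisor claim I would use the fact that $FH$ has no zero-divisors at all for $H$ torsion-free — but since the general Kaplansky zero-divisor conjecture is open, the honest route is to note that we only need it for the two-generator subgroup $\langle g,h\rangle$, and in fact only for the subgroup $\langle k\rangle \cong \Z$ if we are willing to work just inside $F[\langle k\rangle]\cong F[x,x^{-1}]$, a Laurent polynomial ring over a field, which is an integral domain; hence $1 - ck\neq 0$ is not a zero-divisor there. But this does not immediately give non-zero-divisor in the larger ring $FH$. The cleanest self-contained argument is: $1-ck$ is a non-zero-divisor in $F[\langle k\rangle]$ (a PID, indeed Euclidean), and $FH$ is free as a left (and right) $F[\langle k\rangle]$-module on a transversal of $\langle k\rangle$ in $H$; multiplication by $1-ck$ on $FH$ is then $F[\langle k\rangle]$-linear and injective on each free summand, hence injective. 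Symmetrically on the right. This handles ``not a zero-divisor'' without invoking any open conjecture.

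For the ``not a unit'' claim, suppose $(1-ck)\beta = 1$ in $FH$. Map $FH \to F[\langle k\rangle] \cong F[x^{\pm 1}]$ via the retraction induced by a splitting — but $\langle k\rangle$ need not be a retract of $H$. Instead I would use the augmentation-type argument combined with a homomorphism detecting $k$: choose a homomorphism $\phi\colon H \to \Z$ with $\phi(k) = d$; if $k$ has infinite order we can at least arrange $\phi(k)\neq 0$ provided $k$ is not in the (finite, since $H$ f.g.) ... no — better: since $k$ has infinite order, $\langle k\rangle\cong\Z$, and the quotient map does not help directly. The honest classical statement is that $1 - ck$ with $\operatorname{ord}(k)=\infty$ is a non-unit in $FH$ because its image under the ring homomorphism $FH\to F[H/\!\!\sim]$ collapsing on a suitable finite-index... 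The robust argument: if $1-ck$ were invertible with inverse $\beta$, restrict to the subring generated by $k$ and the finitely many elements in $\operatorname{supp}\beta$, a f.g. torsion-free group $K$ containing $k$; pass to a finite-index subgroup or use a residually-finite quotient of $K$ in which $k$ survives with large order $N$ (torsion-free f.g. ... not all residually finite, but here one only needs a homomorphism onto a finite group in which $k$ has order $\geq$ the diameter of the supports, which exists since we may use the subgroup $\langle k\rangle$ and extend — or simply apply that $1-ck$ is a non-unit in $F[\langle k\rangle]$, and $F[\langle k\rangle]\hookrightarrow FH$ splits as $F[\langle k\rangle]$-bimodules as above, so an inverse in $FH$ would project to an inverse in $F[\langle k\rangle]$, contradiction since $1-ck$ is a non-unit in the Laurent polynomial ring). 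This last projection argument, using the bimodule splitting $FH = F[\langle k\rangle] \oplus C$ where $C$ is spanned by the non-identity cosets, is the same mechanism as in the zero-divisor case and is the cleanest; I expect the main obstacle is simply making sure the splitting is genuinely two-sided (it is, taking $C$ = span of $Hk\langle k\rangle$-coset representatives worth of group elements outside $\langle k\rangle$) and that $1-ck$ is a non-unit in $F[x^{\pm1}]$ (clear: its image under $x\mapsto 1$ is $1-c$, and if $c=1$ use $x\mapsto$ any other root of unity-free specialization, or directly: a unit of $F[x^{\pm1}]$ is $ax^n$, and $1-ck$ is not a monomial).
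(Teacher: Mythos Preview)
Your proposal eventually lands on a correct argument, but it takes a genuinely different route from the paper.  The paper's proof is a short, purely combinatorial support argument: if $\alpha\beta\in F$ for some $\beta$ with finite support $S$, then the symmetric difference $gS\,\triangle\,hS$ must be empty (since it is even and contained in $\operatorname{supp}(\alpha\beta)\subseteq\{1\}$), so $gS=hS$; iterating $s\mapsto hg^{-1}s$ inside the finite set $S$ forces $hg^{-1}$ to have finite order, contradicting torsion-freeness.  This handles the unit and zero-divisor cases simultaneously in a few lines, with no auxiliary structure.

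Your approach instead reduces to $\alpha=1-ck$ with $k$ of infinite order, and then exploits the subring $F[\langle k\rangle]\cong F[x^{\pm1}]$: freeness of $FH$ as a one-sided $F[\langle k\rangle]$-module gives the non-zero-divisor statement, and the $F[\langle k\rangle]$-module splitting $FH=F[\langle k\rangle]\oplus C$ (with $C$ the span of $H\smallsetminus\langle k\rangle$) lets you project a putative inverse down to an inverse in $F[x^{\pm1}]$, which is impossible since $1-cx$ is not a monomial.  After the false starts are stripped out, this is sound; the key point you should state cleanly is that $C$ is a left $F[\langle k\rangle]$-submodule (since $\langle k\rangle\cdot(H\smallsetminus\langle k\rangle)\subseteq H\smallsetminus\langle k\rangle$), so the projection onto $F[\langle k\rangle]$ is a left module map and $(1-ck)\beta=1$ forces $(1-ck)\pi(\beta)=1$.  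The payoff of your method is that it visibly generalises (any element supported on an infinite cyclic subgroup and non-unit/non-zero-divisor there remains so in $FG$), but for the two-term case needed here the paper's support argument is decidedly shorter and avoids the detours through retractions, residual finiteness, and homomorphisms to $\Z$ that clutter your write-up and should be excised.
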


\begin{proof}
Suppose not.  Then there exists $\beta \in FG$ with support $S$ of size $n < \infty$, such that $\alpha \beta \in F$.  The Boolean sum ($gS~\textsc{xor}~hS$) has an even
number of elements, but is contained in the support of $\alpha \beta$, which is either $\{ 1 \}$ or $\emptyset$.  Hence $gS=hS$.  For each $s \in S$, it follows that
$hg^{-1} s \in S$.  Iterating, $(hg^{-1})^n s \in S$ for all $n \in \mathbb{Z}_+$.
But $S$ is finite, so the sequence $\{ (hg^{-1} )^n s \}$ has repetitions, and
$(hg^{-1})^k=1$ for some $k > 0$.  Since $G$ is torsion-free and $g \neq h$, this is a contradiction.
\end{proof}

\begin{cor}\label{cor3.4}%3.4
If there exists a permutation $\sigma \in S_4$ such that
\[
m(\sigma(1))=m(\sigma(2)) > m(\sigma(3))=m(\sigma(4)),
\]
then $t$ has infinite order.
\end{cor}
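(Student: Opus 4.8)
The plan is to argue by contradiction: suppose $t$ has finite order in $G$. By Lemma \ref{lem3.1} we may also assume some $a_j$ is isolated, necessarily $a_3$ or $a_4$ since $a_1=a_2$. First I reduce the exponents to a normal form. Under the standing hypotheses of Case 3 we have $a_1=a_2$, $m(1)\ne m(2)$ and $m(1)\ne m(4)$; together with the assumption of the corollary that $\{m(1),\dots,m(4)\}=\{p,p,q,q\}$ with $p>q\ge 1$, a short check shows that, after cyclically permuting $w$ (which does not change $G$), we may take $(m(1),m(2),m(3),m(4))=(p,q,p,q)$, that (\ref{eq1}) survives in the form $a_1^2a_3a_4=1$, and that one of the pairs $\{a_1,a_2\}$ or $\{a_1,a_4\}$ consists of equal elements. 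The residual symmetries of the pattern $(p,q,p,q)$ (cyclic permutation by four syllables together with the inversion$/x\leftrightarrow x^{-1}$ symmetry already used in Case 1) cut down the case analysis, but I do not expect them to merge these two sub-cases, so both must be carried.

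Next I invoke Lemma \ref{lem3.2}: since $\mathcal{P}$ is not aspherical and $t$ has finite order, in the sub-case $a_1=a_2$ either (i) $\langle a_3,a_4\rangle$ is cyclic, or (ii) $a_3^{-1}a_1$ is a power of $a_3^{-1}a_4$; in the sub-case $a_1=a_4$ the analogous dichotomy holds with the roles of the two $A$-pairs interchanged. In case (ii), put $u=a_3^{-1}a_4$ and write $a_1=a_3u^{m}$, $a_4=a_3u$; then $a_1^2a_3a_4=1$ becomes the relation $(a_3u^{m})^2a_3^2u=1$ in $A=\langle a_3,u\rangle$, in which $a_3$ and $u$ occur with total exponents $4$ and $2m+1$, which are coprime. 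For $m\in\{0,\pm 1\}$ one reads off directly that $A$ is cyclic or has $2$-torsion, both excluded; for $|m|\ge 2$ a change of generators rewrites the relation in the form $x^2=y^{\ell}$ with $|\ell|\ge 2$, which is not a proper power, so if $A$ is the corresponding one-relator group then it is torsion-free, hence locally indicable, and $t$ then has infinite order by \cite{Ho}, a contradiction. Case (i) is handled the same way: $\langle a_3,a_4\rangle=\langle c\rangle$ and $a_1^2a_3a_4=1$ give $a_1^2=c^{\ell}$, and the same trichotomy (cyclic $/$ $2$-torsion $/$ one-relator of type $x^2=y^{\ell}$) applies.

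The subtle point, and the reason Lemma \ref{lem3.3} is placed immediately before the corollary, is that in some of these configurations the deduction above only shows that $A$ is a torsion-free \emph{quotient} of the relevant torus-knot-type group, so local indicability is not automatic and \cite{Ho} cannot be quoted directly. For those residual configurations I would instead work in the group ring $FA$ over a suitable field $F$: the relation supplied by Lemma \ref{lem3.2}, together with (\ref{eq1}) and the assumption that $t$ has finite order, forces a two-term element $ag+bh$ of $FA$ with $g\ne h$ to be a unit or a zero-divisor, contradicting Lemma \ref{lem3.3} — and, crucially, this step does not require $A$ to be locally indicable. I expect the main obstacles to be the bookkeeping in the normal-form reduction (deciding exactly which equalities $a_i=a_j$ can occur and matching each to the correct instance of Lemma \ref{lem3.2}) and the extraction of the two-term element in the residual cases; the curvature and weight-test machinery of Case 1 remains available as a fallback should some configuration resist the group-ring treatment.
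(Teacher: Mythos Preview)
Your approach has a genuine gap, and it also diverges substantially from the paper's much shorter argument.

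The paper's proof does not use Lemma~\ref{lem3.1} or Lemma~\ref{lem3.2} at all, and involves no case analysis on the $a_i$.  The missing ingredient is \cite[Theorem~2]{BH}: if $t$ has finite order in $G$ then the element
\[
\alpha \;=\; m(1) + m(2)\,a_1 + m(3)\,a_1a_2 + m(4)\,a_1a_2a_3
\]
is a unit in $\mathbb{Q}A$.  Since the $b_i$ generate $\langle t\rangle$, the two distinct values taken by the $m(j)$ are coprime, so one may choose a prime $p$ dividing the larger value $m(\sigma(1))=m(\sigma(2))$ but not $m(\sigma(3))=m(\sigma(4))$.  Clearing denominators and reducing modulo $p$ then turns $\alpha$ into a two-term element of $\mathbb{F}_pA$ that is a unit or a zero-divisor, and Lemma~\ref{lem3.3} gives the contradiction.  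That is the entire proof.

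Your route attempts instead to force $A$ to be cyclic, to have $2$-torsion, or to be locally indicable, via the relation $a_1^2a_3a_4=1$ combined with the dichotomy of Lemma~\ref{lem3.2}.  Two problems arise.  First, the claimed rewriting of $(a_3u^{m})^{2}a_3^{2}u=1$ as $x^{2}=y^{\ell}$ is not justified; this word has length greater than two in any obvious generating pair, and in general a two-generator one-relator relation does not reduce to torus-knot form.  Second, and more seriously, you yourself note that in the residual cases you only obtain $A$ as a torsion-free \emph{quotient} of a one-relator group, so local indicability (and hence \cite{Ho}) is unavailable.  Your proposed fallback --- that ``the assumption that $t$ has finite order forces a two-term element of $FA$ to be a unit or zero-divisor'' --- is exactly the step that requires \cite[Theorem~2]{BH} and the mod-$p$ reduction above; without naming that result you have no mechanism for producing any group-ring element from the hypothesis on $t$, and the argument does not close.
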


\begin{proof}
Since the $b_i$ generate $\langle t \rangle$, we can deduce that $m(\sigma(1))$ and
$m(\sigma(3))$ are coprime.  Let $p$ be a prime factor of $m(\sigma(1))$.  Now by
\cite[Theorem 2]{BH}, if $t$ has finite order then the element
\[
\alpha := m(1) + m(2) a_1 + m(3) a_1 a_2 + m(4) a_1 a_2 a_3
\]
is a unit in $\mathbb{Q} B$.  Thus there exists $\beta \in \mathbb{Z} B$ with
setwise-coprime coefficients such that $\alpha \beta \in \mathbb{Z}$.  Reducing modulo $p$, $\alpha$ is either a unit or a zero-divisor in $Z_p B$ (depending on whether or not $p | \alpha \beta$).  But precisely two of the coefficients of $\alpha$ are coprime to $p$, so this contradicts Lemma \ref{lem3.3}.
\end{proof}
Now let us return to the proof of the theorem in Case 3 (in which it is more convenient to work with pictures over one-relator products).

Since $\mathcal{P}$ is not aspherical, there is a non-empty reduced spherical
picture $S$ over the one-relator product
$\langle A \ast B \mid a_1 b_1 a_2 b_2 a_3 b_3 a_4 b_4 \rangle$ \cite{Ho2}.
The vertices of $S$ have label $a_1b_1a_2b_2a_3b_3a_4b_4$ (up to cyclic permutation and inversion) and the regions of $S$ are either $A$-regions or $B$-regions whose label equals 1 in 
$A$ or $B$ except possibly for the distinguished region $\Delta_0$ if it is a $B$-region.
This time assign angles to the corners of $S$ as follows: each corner of an $n$-gon is assgned an angle $(n-2) \pi /n$.  This way the curvature of each region is 0 and, since the total 
curvature is $4 \pi$, there exists at least one vertex, $v$ say, having positive curvature, that is, the sum of the incident angles is less than $2 \pi$.  (See  \cite[Section 3]{Ho2}.)  

Assume until otherwise stated that none of the regions incident at $v$ is $\Delta_0$.  Since there are eight edges incident at $v$ some of the incident regions must be 2-gons.  Indeed if there are at most two 2-gons then clearly $v$ does not have positive curvature, so assume otherwise.  Observe that no 2-gons at $v$ can share an edge for
otherwise one of these is an $A$-region and by Lemma \ref{lem3.1} and $A$ non-cyclic must have label $(a_1 a_2^{-1})^{\pm 1}$.  Hence one of the labels of the adjacent 2-gonal $B$-region is $b_1^{\pm 1}$, while the other label is either $b_2^{\mp 1}$ or $b_4^{\mp 1}$ which contradicts $b_4 \neq b_1 \neq b_2$.  We may also assume that no four of the corners at $v$ belong to 2-gons.  For otherwise, since no two of the 2-gons share an edge, all four 2-gons belong to the same free factor $A$ or $B$.  But
$a_3$ and $a_4$ cannot be labels of 2-gonal regions, so our four 2-gons are all $B$-regions.  But $b_4 \neq b_1 \neq b_2$ then forces the $b_j$ to be equal in pairs, and the result follows by Corollary \ref{cor3.4}.

It follows that $v$ must have exactly three 2-gonal corners, four 3-gonal corners and the eighth corner belonging to a 3-, 4- or 5-gon.  (For example, if there are two 4-gonal corners then $v$ has curvature at most $2 \pi - [3(\pi/3)+2(\pi/4)]=0$.)

Now the corner at $v$ labelled $b_1$ cannot be 2-gonal.  For otherwise, by hypothesis, it yields $b_1=b_3$.  Since the $b_j$ cannot be equal in pairs, we then have
$b_2 \neq b_4$.  But also $b_4 \neq b_1 \neq b_2$, so the corners at $v$ labelled $b_2,b_4$ are not 2-gonal.  Moreover, the corners labelled $a_1,a_2$ are then not 2-gonal since they are adjacent to the corner labelled $b_1$, while those labelled $a_3$ and $a_4$ are not 2-gonal since neither $a_3$ nor $a_4$ is involved in a relation of length 2 among the $a_j$.  But all of this contradicts the fact that $v$ has three incident 2-gonal corners.  Observe also that at most one of the corners labelled $b_4,a_1$ can be 2-gonal, and the same holds for the corners labelled $a_2,b_2$.  Hence the three 2-gonal corners at $v$ consist of: $b_3$; one of $b_4,a_1$; and one of $a_2,b_2$.

At least one $A$-region incident at $v$ is 3-gonal, so there is a relator of length 3 among the $a_j$.  Suppose first of all that such a relator involves $a_1$ or $a_2$.  Letting $x$ denote $a_1 = a_2$, we have one of
(i) $x^3$,
(ii) $x^2 x^{-1}$,
(iii) $x^2 a_3^{\pm 1}$,
(iv) $x^2 a_4^{\pm 1}$,
(v) $xa_3^{\pm 2}$,
(vi) $x a_4^{\pm 2}$,
(vii) $x^{\pm 1} a_3 a_4$,
(viii) $x^{\pm 1} a_4 a_3$,
(ix) $xa_3 a_4^{-1}$,
(x) $xa_3^{-1} a_4$,
(xi) $x^{-1} a_3^{-1} a_4$, or
(xii) $x^{-1} a_3 a_4^{-1}$.
Any one of the relators (i)-(vii), together with the relator $x^2 a_3 a_4$ from (\ref{eq1}) and the fact that $A$ is torsion-free, implies that $A$ is cyclic, contrary to 
%\newpage
\begin{figure}
\begin{center}
\psfig{file=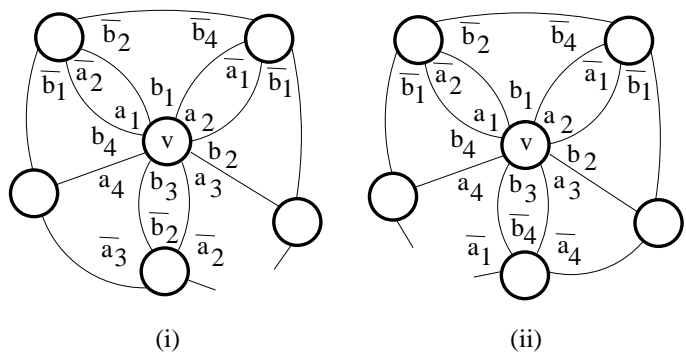}
\end{center}
\caption{neighbourhood of vertex $v$}
\end{figure}
hypothesis.  Relator (viii) can be rewritten as $a_3 = a_4^{-1} x^{\pm 1}$ so
$1=x^2 a_3 a_4 = x^2 a_4^{-1} x^{\pm 1} a_4$ and $A$ is the homomorphic image of the
soluble Baumslag--Solitar group BS$(1,\pm 2)$, so locally indicable.
Relator (ix) gives $1 = x^2 a_3 a_4 = x^2 a_3 xa_3$, so $A$ is cyclic and a similar
contradiction is obtained from (x).  Relator (xi) can be rewritten as
$x=a_3^{-1} a_4$.  By Lemma \ref{lem3.2} either the subgroup of $A$ generated by $a_3,a_4$ is cyclic, or $a_3^{-1} x$ is a power of $a_3^{-1} a_4 = x$ and in either case $A$ is then cyclic.  A similar argument applies to (xii), noting that the second possibility in Lemma \ref{lem3.2} can be conjugated to: $xa_3^{-1}$ is a power of $a_4 a_3^{-1} (=x)$.
Thus no 3-gonal relation among the $a_j$ involves $a_1$ or $a_2$.

Now consider the 2-gon with label $b_3$ at $v$.  We cannot have $b_1=b_3$ by the argument given above, so the other label of this 2-gon is either $b_2^{-1}$ or
$b_4^{-1}$.  It follows that one of the neighbouring regions has $a_1^{-1}$ or
$a_2^{-1}$ corner label, so cannot be 3-gonal.  Thus every other corner of $v$ is 2-gonal or 3-gonal.  In particular, the $a_1$- and $a_2$-corners are 2-gonal and the
$b_1$-, $b_2$- and $b_4$-corners are 3-gonal.  Now the other label of the 2-gon at the $a_1$-corner of $v$ is $a_2^{-1}$, and vice versa.  Hence the other two corners of the 3-gon at 
the $b_1$-corner are labelled $b_2^{-1}$ and $b_4^{-1}$.  Hence we have a relation $m(1)=m(2)+m(4)$ in $B$.  The two cases for the vertex $v$ are shown in Figure 3.6(i), (ii).

Now recall from Lemma \ref{lem3.1} that one of the $a_j$ is isolated in $A$ and so we may assume that none of the $b_j$ is isolated in $B$.  If the $m(j)$ consist of two equal pairs, then the result follows from Corollary \ref{cor3.4}.
There are only two other possibilities: either three of the $m(j)$ are equal, and the fourth divides them (and hence is equal to 1 since the $m(j)$ are setwise coprime), or two of the $m(j)$ are equal, and each of the other two divide them (and are coprime to one another).  In the first case the only possibility is that
$m(2)=m(3)=m(4) > 1$ and $m(1)=1$, in which case $m(1)=m(2)+m(4)$ is clearly false.
In the second case $m(3)$ is equal to either $m(2)$ or $m(4)$, and $m(1)$ divides $m(3)$.  But then $m(2)+m(4) > m(3) > m(1)$, so the equation $m(1)=m(2)+m(4)$ again fails.

In conclusion, $v$ does not have positive curvature.

To complete the proof suppose that $\Delta_0$ is incident at $v$ and that $\Delta_0$ is a $B$-region of degree $k_0$.  It follows from the above that there are at most three 2-gons distinct from $\Delta_0$ incident at $v$ and so $v$ has curvature at most
$2 \pi - [ 4(\pi/3) + (k_0-2) \pi / k_0]$ which implies that the total curvature of $S$ is at most $(2-k_0 /3) \pi < 4 \pi$, a contradiction and Theorem 1 follows for this case.

%%%%%%%%%%%%%%%%%%%%%%%%%%%%%%%%%%%%%%%%%
%%%%%%%%%%%%%%%%%%%%%%%%%%%%%%%%%%%%%%%%%

\section{The Case of No Cyclic Factors}\label{noncyclic}

In this section we prove Theorem 1 under the assumption that neither $A$ nor $B$ is cyclic.  The statement of the theorem will hold if the relative presentation
\[
\mathcal{P}_X \colon \langle A \ast B, X \mid a_1 Xb_1 X^{-1} a_2 Xb_2 X^{-1} a_3 Xb_3 X^{-1} a_4 Xb_4 X^{-1} \rangle
\]
is aspherical \cite{Kim}.
The star graph $\Gamma_X$ of $\mathcal{P}_X$ consists of two disjoint bouquets
of circles, with 4 circles in each.  One of these corresponds to $A$ and has edge labels
$a_1,a_2,a_3,a_4$; and the other to $B$ and has edge labels $b_1,b_2,b_3,b_4$.

Suppose by way of contradiction that $\S$ is a reduced spherical picture over
$\mathcal{P}_X$. As in Case 1(ii) of Section 3   
contract the boundary of $\S$ to a point which is then deleted and let $\mathcal{D}$ denote the dual, whose labelling is inherited from $\S$.  The regions of $\mathcal{D}$ are 
$\Delta^{\pm 1}$ where $\Delta$ is given by Figure 4.1(i).  The vertices of $\mathcal{D}$ are either $A$-vertices or $B$-vertices whose label equals 1 in $A$ or $B$ except possibly for the label of the distinguished ($A$ or $B$) vertex $v_0$ of degree $k_0$.

The region $\Delta$ is called \textit{inner} if $v_0$ is not a vertex of $\Delta$, otherwise $\Delta$ is a \textit{boundary} region.  The degree of $\Delta$, denoted $d(\Delta)$, is defined to be the number of vertices of $\Delta$ of degree $>2$ except possibly $v_0$ if $\Delta$ is a boundary region.  
Give each corner of $\mathcal{D}$ at a vertex of degree $d$ the angle $2 \pi / d$ as before.  Therefore the curvature of a region 
$\Delta$ of $\mathcal{D}$ is again given by (\ref{eq2}) and the total curvature of the regions is $4\pi$. 

%\newpage
\begin{figure}
\begin{center}
\psfig{file=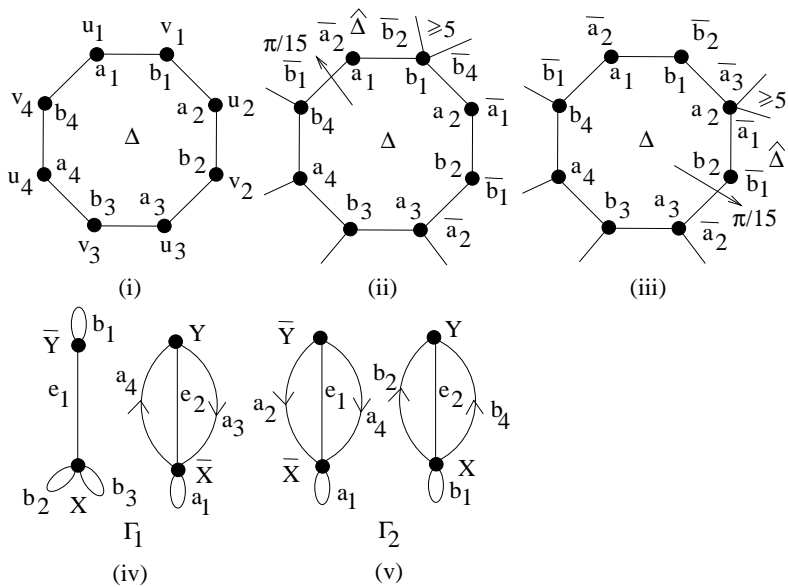}
\end{center}
\caption{region $\Delta$, distribution of curvature and star graphs}
\end{figure}

\noindent If $c(\Delta) \leq 0$ and
$d(\Delta) \geq 4$ for each inner region $\Delta$ of $\mathcal{D}$ then the total curvature is at most $\sum c(\Delta ')$ where the sum is taken over all the boundary regions $\Delta '$ of $\mathcal{D}$.  But then
$c(\Delta') \leq c(k_0,3,3,3)= \frac{2 \pi}{k_0} < \frac{4 \pi}{k_0}$, so the total curvature is less than $4 \pi$ from which we conclude that $\mathcal{P}_X$ is aspherical. We  
use curvature distribution as described in Section 3.   
Again let $c^{\ast}(\hat{\Delta})$ denote
$c(\hat{\Delta})$ plus all possible additions of $c(\Delta)$ according to the distribution rules.  If 
$c^{\ast}(\hat{\Delta}) \leq 0$ for each inner $\hat{\Delta}$ and 
$c^{\ast}(\hat{\Delta}) < 4 \pi / k_0$ for each boundary region $\hat{\Delta}$ then the total $4 \pi$ cannot be attained and $\mathcal{P}_X$ is aspherical.

Put
\[
S_A = \{ a_1 a_2^{\pm 1}, a_1 a_3^{\pm 1}, a_1 a_4^{\pm 1}, a_2 a_3^{\pm 1}, a_2 a_4^{\pm 1}, a_3 a_4^{\pm 1} \} \quad \text{and}
\]
\[
S_B = \{ b_1 b_2^{\pm 1}, b_1 b_3^{\pm 1}, b_1 b_4^{\pm 1}, b_2 b_3^{\pm 1},
b_2b_4^{\pm 1}, b_3b_4^{\pm 1} \}.
\]
It can be assumed without any loss that $n_B \leq n_A$ where $n_A$, $n_B$ denotes the number of admissable paths in $\Gamma_X$ contained in $S_A$, $S_B$ respectively.
If $n_A > 3$ then $A$ is cyclic (a contradiction); or if $n_A=n_B=0$ or $n_A=1$, $n_B=0$ then $d(\Delta) \geq 6$, and so $c(\Delta) \leq 0$,
for each inner region of $\mathcal{D}$
therefore $\mathcal{P}_X$ is aspherical by the above comments, so assume otherwise.  Given this, up to symmetry (obtained from cyclic permutation and inversion of the relator), the cases to be considered are the following.
\[
\begin{array}{ll}
n_A=1 \colon &a_1=a_2;\, a_1=a_2^{-1};\, a_1=a_3; \, a_1=a_3^{-1}.\\
n_A=2 \colon &a_1=a_2,\, a_3=a_4; \, a_1 = a_2,\, a_3=a_4^{-1}; \, a_1=a_2^{-1},\,
a_3=a_4^{-1};\\
&a_1=a_3, \, a_2=a_4; \, a_1=a_3, \, a_2=a_4^{-1}; \, a_1 = a_3^{-1}, \, a_2=a_4^{-1}.\\
n_A=3 \colon &a_1=a_2=a_3; \, a_1=a_2^{-1}=a_3; \, a_1=a_2=a_3^{-1}.
\end{array}
\]
The following assumption will be made throughout.

\medskip

({\bf{A}}) The number of $A$ vertices $v \neq v_0$ of $\mathcal{D}$ of degree 2 is maximal.

\medskip

First consider
$n_A=n_B=1$.  Up to symmetry the subcases to be considered are:
$a_1=a_2^{\pm 1}$, $b_1=b_2^{\pm 1}$;
$a_1=a_2^{\pm 1}$, $b_1=b_3^{\pm 1}$;
$a_1=a_2^{\pm 1}$, $b_2=b_3^{\pm 1}$;
$a_1=a_2^{\pm 1}$, $b_2=b_4^{\pm 1}$; and
$a_1=a_3^{\pm 1}$, $b_1=b_3^{\pm 1}$.
Checking vertex labels shows that $d(\Delta) \geq 6$ when
$a_1=a_2$, $b_1=b_2^{-1}$;
$a_1=a_2$, $b_2=b_4^{\pm 1}$;
$a_1=a_2^{-1}$, $b_1=b_2^{\pm 1}$;
$a_1=a_2^{-1}$, $b_2=b_4$;
$a_1=a_3$, $b_1=b_3^{-1}$; or
$a_1=a_3^{-1}$, $b_1=b_3^{\pm 1}$
and so we are left with eleven subcases.

Throughout the following $\Delta$ will be an inner region unless stated otherwise.

\medskip
Let $a_1=a_2$ and $b_1=b_2$.  If $c(\Delta)>0$ then $\Delta$ is given by Figure 4.1(ii), (iii).
Add $c(\Delta) \leq c(3,3,3,3,5) = \pi / 15$ to $c(\hat{\Delta})$ as indicated.
Observe that $\hat{\Delta}$ receives $\pi / 15$ across the $(a_2^{-1},b_1^{-1})$-edge each time, that $d(\hat{\Delta}) \geq 6$ and that $\hat{\Delta}$ contains a vertex of degree $\geq 5$.  It follows that $c^{\ast}(\hat{\Delta}) \leq c(3,3,3,3,3,5) + \pi/15 = - \pi / 5$ if $\hat{\Delta}$ is inner.  If $\tilde{\Delta}$ is a boundary region then either $c(\tilde{\Delta}) \leq c(k_0,3,3,3,3) = 2 \pi/k_0 - \pi/3$ or
$c^{\ast} (\tilde{\Delta}) \leq c(k_0,3,3,3,3,3) + \pi /15 = 2 \pi/k_0 - 3 \pi/5$,
so $c(\tilde{\Delta}),c^{\ast}(\tilde{\Delta}) < 4 \pi/k_0$ and the result follows.

\medskip
Let $a_1=a_2^{-1}$ and $b_2=b_4^{-1}$.  Then the relative presentation $\mathcal{P}_X$ is equivalent to the relative presentation
\[
\mathcal{P}_{X,1} \colon \langle A \ast B,X,Y \mid Y^{-1}X^{-1}b_2^{-1}Xa_1 X^{-1},
Yb_1 Y^{-1} a_3 X^{-1} b_3 Xa_4 \rangle
\]
whose star graph $\Gamma_1$ is given by Figure 4.1(iv) in which the labels $e_1=e_2=1$.
(We will use $e$, $e_1$ or $e_2$ to denote 1 in $A$ or $B$.)
Assign the weight $\frac{1}{2}$ to all the edges of $\Gamma_1$.  Then we obtain an aspherical weight function unless at least one of $b_2^2 b_3^{\pm 1}=1$,
$b_3^2 b_2^{\pm 1}=1$, $a_4 a_3 a_1^{\pm 1}=1$ holds.  If
$a_4 a_3 a_1^{\pm 1}=1$, $b_2^2 b_3^{\pm 1} \neq 1$, $b_3^2 b_2^{\pm 1}\neq 1$ then assign weight 0 to (the edge labelled -- for ease of presentation we will often identify an edge with its label) $e_2$, 1 to $a_1$ and $\frac{1}{2}$ to all other edges; if $a_4 a_3 a_1^{\pm 1} \neq 1$, $b_2^2 b_3^{\pm 1}=1$,
$b_3^2 b_2^{\pm 1} \neq 1$ then assign 0 to $b_1$, 1 to $b_3$ and $\frac{1}{2}$ to all other edges; if $a_4 a_3 a_1^{\pm 1} \neq 1$, $b_2^2 b_3^{\pm 1} \neq 1$,
$b_3^2 b_2^{\pm 1}=1$ then assign 0 to $e_1$, 1 to $b_2$ and $\frac{1}{2}$ to all other edges; if $a_4 a_3 a_1^{\pm 1}=1$, $b_2^2 b_3^{\pm 1}=1$ then assign 0 to $b_1$ and $e_2$, 1 to $b_3$ and $a_1$, and $\frac{1}{2}$ to all other edges; or if
$a_4 a_3 a_1^{\pm 1}=1$, $b_3^2 b_2^{\pm 1}=1$ then assign 0 to $e_1$ and $e_2$, 1 to $b_2$ and $a_1$ and $\frac{1}{2}$ to all other edges.  The fact that $A$ and $B$ are non-cyclic ensures that each of these weight functions is aspherical and the result follows.  (For the reader's benefit we note here that if $\phi$ is one of the weight functions defined above then in each case one confirms that

%\newpage
\begin{figure}
\begin{center}
\psfig{file=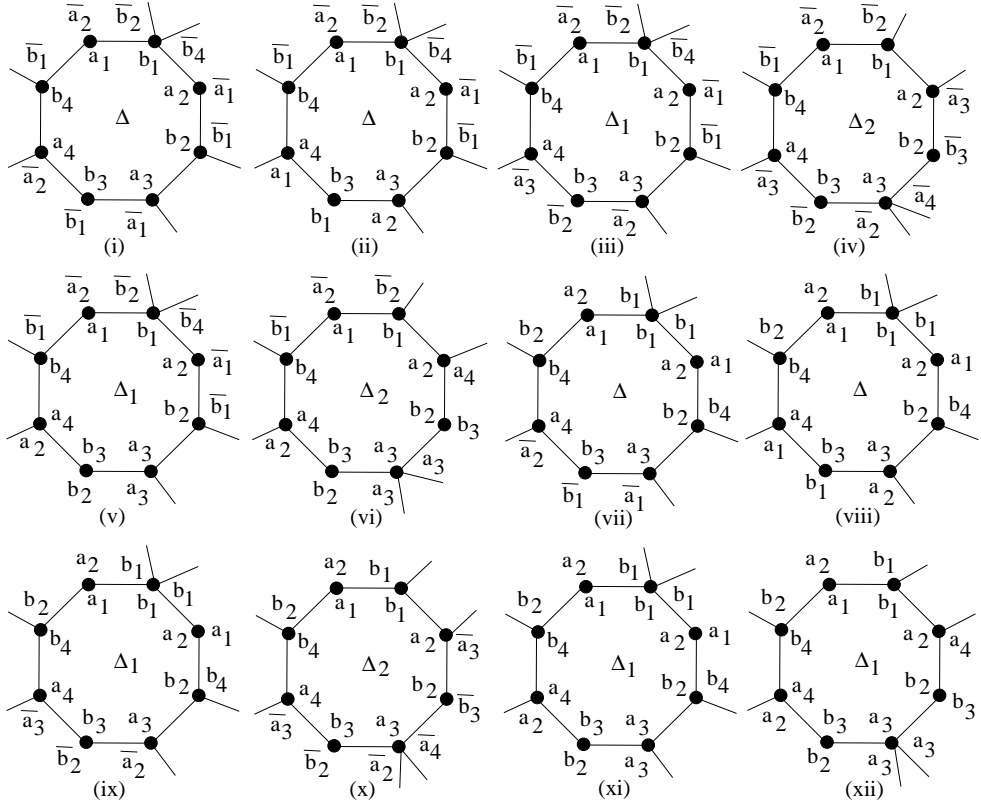}
\end{center}
\caption{regions $\Delta$ such that $d(\Delta) <6$}
\end{figure}

\noindent $(1-\phi(e_2))+(1-\phi(b_2))+(1-\phi(a_1))+(1-\phi(e_1)) \geq 2$ and that
$(1-\phi(b_1))+(1-\phi(a_3))+(1-\phi(b_3))+(1-\phi(a_4))\geq 2$.  Calculations such as these are made implicitly throughout what follows.)

\medskip
Let $a_1=a_3$ and $b_1=b_3$.  Then the presentation $\mathcal{P}_X$ is equivalent to the relative presentation
\[
\mathcal{P}_{X,2} \colon \langle A \ast B,X,Y \mid Y^{-1} Xa_1 X^{-1} b_1 X,
Ya_2 X^{-1} b_2 Ya_4 X^{-1} b_4 \rangle
\]
whose star graph $\Gamma_2$ is given by Figure 4.1(v).  Assigning weight $\frac{1}{2}$ to all edges of $\Gamma_2$ defines an aspherical weight function unless either $a_1^{\pm 1} (a_2^{-1} a_4)^{\pm 1}=1$, in which case $a_4$ is isolated; or $b_1^{\pm 1}(b_2 b_4^{-1})^{\pm 1}=1$, in which case $b_4$ is isolated.  If $a_1^{\pm 1} (a_2^{-1} a_4)^{\pm 1}=1$ then assign weight 0 to $e_1$, 1 to $a_1$ and $\frac{1}{2}$ to all other edges; or if
$b_1^{\pm 1} (b_2 b_4^{-1})=1$ then assign 0 to $e_2$, 1 to $b_1$ and $\frac{1}{2}$ to all other edges.  The fact that $A$ and $B$ are non-cyclic ensures that both weight functions are aspherical.

The regions $\Delta$ such that $d(\Delta) <6$ for the remaining 8 subcases are given by Figure 4.2(i)-(xii).  In Figure 4.2(i), if $d(v_1)=d(v_2)=d(u_3)=3$ then
$l(v_2)=b_1^{-1} b_2 b_4$ 
%\newpage
\begin{figure}
\begin{center}
\psfig{file=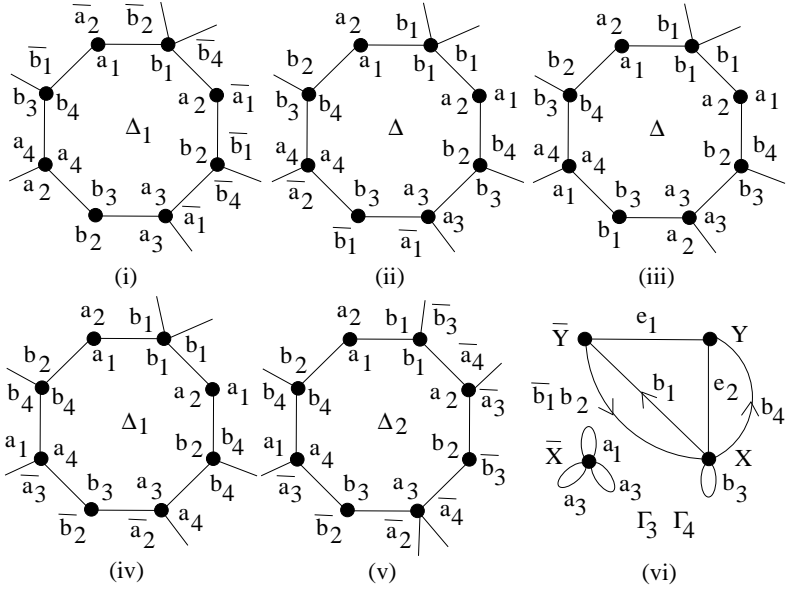}
\end{center}
\caption{regions with four degree 3 vertices and star graphs}
\end{figure}
%\noindent 
and $l(u_3)=a_3 a_1^{-1} a_4$ forcing the isolated pair
$a_4$ and $b_4$; or if
$d(v_1)=d(u_4)=d(v_4)=3$ then $l(u_4)=a_2^{-1} a_4 a_3$ and $l(v_4)=b_4 b_1^{-1} b_2$ again forcing $a_4,b_4$ isolated, so it can be assumed that $d(v_1) > 3$.
In Figure 4.2(ii), if $d(v_1)=d(v_2)=d(u_3)=3$ then $l(v_2)=b_1^{-1} b_2 b_4$ and $l(u_3)=a_3 a_2 a_4$ forcing $a_4,b_4$ isolated; or if $d(v_1)=d(u_4)=d(v_4)=3$ then
$l(u_4)=a_1 a_4 a_3$ and $l(v_4)=b_4 b_1^{-1} b_2$ forcing $a_4,b_4$ isolated, so let
$d(v_1) > 3$.  In Figure 4.2(iii), if $d(v_1)=d(v_2)=d(u_3)=3$ then
$l(v_2)=b_1^{-1} b_2 b_4$ and $l(u_3)=a_3 a_2^{-1} a_4$ forcing $a_4,b_4$ isolated; or if $d(v_1)=d(u_4)=d(v_4)=3$ then $l(v_4)=b_4 b_1^{-1} b_3$ and
$l(u_4)=a_3^{-1} a_4 a_4$ forcing $a_3,b_4$ isolated, so let $d(v_1) > 3$.
In Figure 4.2(iv), if $d(u_3)=d(v_1)=d(u_2)=3$ then $l(u_2)=a_2 a_3^{-1} a_4$ and
$l(v_1)=b_2^{-1} b_1 b_4$ forcing $a_4,b_4$ isolated; or if $d(u_3)=d(u_4)=d(v_4)=3$ then $l(u_4)=a_3^{-1} a_4 a_1$ and $l(v_4)=b_4 b_1^{-1} b_4$ forcing $a_4,b_1$ isolated, so let $d(u_3)>3$.  In Figure 4.2(v), if $d(v_1)=d(v_2)=d(u_3) =3$ then
$l(v_2)=b_1^{-1} b_2 b_4$ and $l(u_3)=a_3 a_3 a_4$ forcing $a_4,b_4$ isolated; or if
$d(v_1)=d(u_4)=d(v_4)=3$ then either $l(v_4)=b_4 b_1^{-1} b_2$ and
$l(u_4)=a_2 a_4 a_3$ or $l(v_4)=b_4 b_1^{-1} b_3^{-1}$ and
$l(u_4)=a_2 a_4 a_3^{-1}$ forcing $a_4,b_4$ isolated each time, so let $d(v_1)>3$.
In Figure 4.2(vi), $d(u_3) > 3$.  In Figure 4.2(vii)-(ix), $d(u_1)>3$.
In Figure 4.2(x), if $d(u_3)=d(v_1)=d(u_2)=3$ then
$l(u_2)=a_2 a_3^{-1} a_4$ and $l(v_1)=b_1 b_1 b_4$ forcing $a_4,b_4$ isolated; or if
$d(u_3)=d(u_4)=d(v_4)=3$ then either $l(u_4)=a_3^{-1} a_4 a_2$ and $l(v_4)=b_4 b_2 b_1$ or $l(u_4)=a_3^{-1} a_4 a_1^{-1}$ and $l(v_4)=b_4 b_2 b_1^{-1}$ forcing $a_4,b_4$ isolated each time, so let $d(u_3) > 3$.  In Figure 4.2(xi), $d(v_1) > 3$; and in (xii),
$d(u_3) > 3$.  In each of the above Figures 4.2(i)-(xii), the assumption that the remaining four vertices of degree $>2$ each has degree 3 forces either $A$ or $B$ to be cyclic or an isolated pair except for the five regions shown in Figure 4.3
for each of which there is an aspherical weight function on $\Gamma_X$ as follows.

If $\Delta$ is $\Delta_1$ of Figure 4.3(i) or is $\Delta$ of (iii) then $b_4$ is isolated so in each case assign weight 1 in $\Gamma_X$ to each of
$a_1$, $a_2$, $b_1$, $b_2$ and $b_3$, assign 0 to $b_4$ and assign $\frac{1}{2}$ to each of $a_3$ and $a_4$; 

%\newpage
\begin{figure}
\begin{center}
\psfig{file=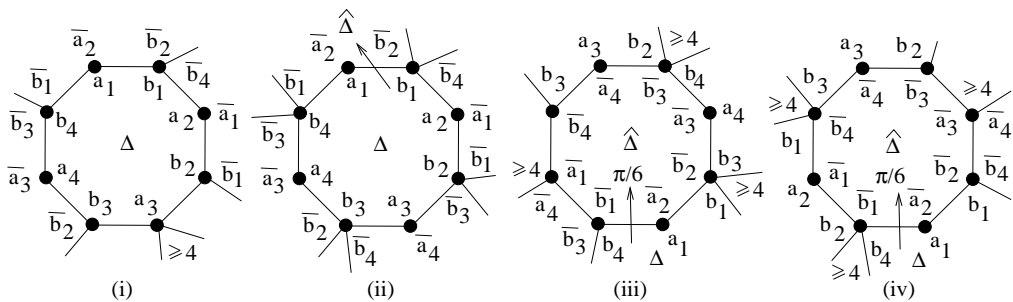}
\end{center}
\caption{positive regions and curvature distribution}
\end{figure}

if $\Delta$ is $\Delta$ of (ii) then $b_2$ is isolated so assign weight 1 to $a_1$, $a_2$, $b_1$, $b_3$ and $b_4$, assign 0 to $b_2$ and assign $\frac{1}{2}$ to each of $a_3$ and $a_4$; or if $\Delta$ is $\Delta_1$ of (iv) or
$\Delta_2$ of (v) then $a_4$ is isolated so assign the weight 1 to
$a_1$, $a_2$, $a_3$, $b_2$ and $b_3$, assign 0 to $a_4$ and assign $\frac{1}{2}$ to each of $b_1$ and $b_4$.

In conclusion we can assume that $c(\Delta) \leq 0$ for each inner region $\Delta$.
But from Figure 4.2 we conclude also that $d(\Delta) \geq 5$ for any region and so $\mathcal{P}_X$ is aspherical.

%%%%%%%%%%%%%%%%%%%%%%%%%%%%%%%%%%%%%%%%%%%%%%%%%%%%%%%%%%%%%%%%%%%%%%%%%%%%

\medskip
We consider now the six cases for $n_A=2$.

\medskip
First let $a_1=a_2$ and $a_3=a_4$.  Then $\mathcal{P}_X$ is equivalent to
\[
\mathcal{P}_{X,3} \colon \langle A \ast B,X,Y \mid Y^{-1} Xa_1 X^{-1} b_1,
Y^2 b_1^{-1} b_2 Xa_3 X^{-1} b_3 Xa_3 X^{-1} b_4 \rangle
\]
whose star graph $\Gamma_3$ is given by Figure 4.3(vi).  Assume that at least one of
$b_1=b_2$, $b_2=b_3$, $b_3=b_4$ or $b_4=b_1$ holds and so by symmetry we can take
$b_1=b_2$.  Assigning in $\Gamma_3$ weight 1 to both the $a_3$ edges and to $e_1$, the weight $\frac{1}{2}$ to $b_1^{-1} b_2$, $b_1$, $e_2$ and $b_4$, and the weight 0 to $a_1$ and $b_3$ yields an aspherical weight function ($\theta$, say) unless
$b_1=b_2 \in \langle b_3 \rangle$ or $b_4 \in \langle b_3 \rangle$.  Suppose that
$b_1=b_2 \in \langle b_3 \rangle$.  Then assign weight 1 to both the $a_3$ edges and $e_1$, weight $\frac{1}{2}$ to $b_1^{-1} b_2$, $b_1$, $e_2$ and $b_3$, and weight 0 to
$a_1$ and $b_4$.  Any admissable path of weight less than 2 forces $A$ or $B$ cyclic or
$b_1 b_3^{\pm 1}=1$ which implies $n_B \geq 3$, a contradiction, so the weight function is aspherical.  Now suppose that $b_4 \in \langle b_3 \rangle$.  Assigning weight 1 to both $a_3$ edges and $e_2$, weight $\frac{1}{2}$ to $b_1^{-1} b_2$, $e_1$, $b_4$ and $b_3$, and weight 0 to $b_1$ and $a_1$ yields an aspherical weight function except when $b_4 = b_1^2$, so assume this holds.  Then, in particular, $b_1 \neq b_4^{\pm 1}$,
$b_1 \neq b_3^{\pm 1}$, $b_2 \neq b_3^{\pm 1}$, $b_2 \neq b_4^{\pm 1}$, $b_3 \neq b_4^{\pm 1}$ and
$b_2 b_3^{-1} \notin \{ b_i^{\pm 1} \colon 1 \leq i \leq 4 \}$.  Moreover, $a_1=a_2$ and $a_3=a_4$ implies $d(u_i) \neq 3$ ($1 \leq i \leq 4$).  Any attempt at labelling now shows that $d(\Delta) \geq 4$ and $c(\Delta) \leq 0$ for any inner region $\Delta$ and the result follows.

\medskip
Now assume that $b_1 \neq b_2$, $b_2 \neq b_3$, $b_3 \neq b_4$ and $b_4 \neq b_1$.
The same weight function $\theta$ as defined above again forces either
$b_2 \in \langle b_3 \rangle$ or $b_4 \in \langle b_3 \rangle$.  Let $b_2 \in \langle b_3 \rangle$ and note that this is symmetric to the case $b_4 \in \langle b_3 \rangle$. 

%\newpage
\begin{figure}
\begin{center}
\psfig{file=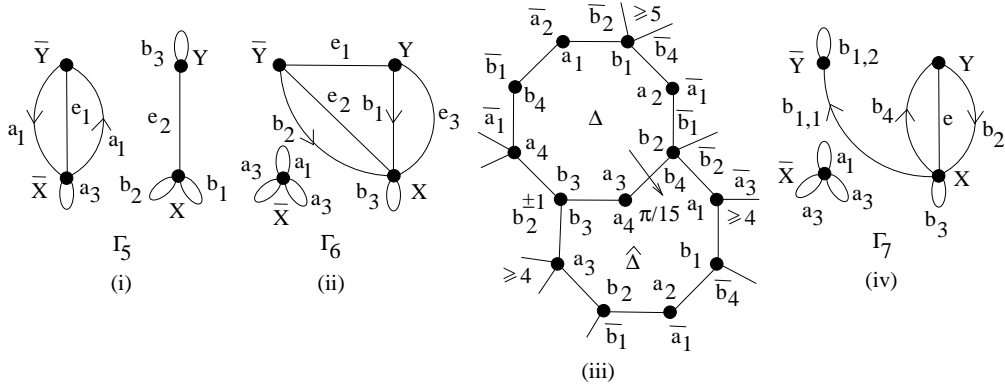}
\end{center}
\caption{star graphs and curvature distribution}
\end{figure}

Checking shows that $d(\Delta) \geq 4$ for each region of $\mathcal{D}$ and so if
$c(\Delta) \leq 0$ for each inner $\Delta$ the result follows.  In fact if
$c(\Delta) > 0$ then $\Delta$ is given by Figure 4.4(i) or (ii).  Let $\Delta$ be as in Figure 4.4(i).  Then $b_4 b_1^{-1} b_3^{-1} = b_1 b_4^{-1} b_2^{-1}=1$ and so
$b_2 = b_3^{-1}$.  Given this, assigning weight 1 in $\Gamma_3$ to both the $a_3$ edges, $b_1^{-1} b_2$, $b_1$ and $b_4$, and weight 0 to the remaining edges yields an aspherical 
weight function.  Let $\Delta$ be as in Figure 4.4(ii).  Then $c(\Delta)>0$ forces at least one of $b_4 b_1^{-1} b_3^{-1}=1$, $b_1 b_4^{-1} b_2^{-1}=1$,
$b_2 b_3^{-1} b_1^{-1}=1$ or $b_3 b_2^{-1} b_4^{-1}=1$.  If
$b_4 b_1^{-1} b_3^{-1}=b_1 b_4^{-1} b_2^{-1}=1$ we are back in the previous case and any other pair forces $B$ cyclic.
Since $b_4 b_1^{-1} b_3^{-1}=1$, $b_1 b_4^{-1} b_2^{-1}=1$ is symmetric with
$b_3 b_2^{-1} b_1^{-1} =1$, $b_3 b_2^{-1} b_4^{-1}=1$ (respectively), we consider only the first two subcases.
Consider first $b_4 b_1^{-1} b_3^{-1}=1$.  Then add
$c(\Delta) \leq c(3,4,4,4) = \pi /6$ to $c(\hat{\Delta})$ as shown in Figure 4.4(iii) where it is assumed that $\hat{\Delta}$ is inner and that $d(u_3)=d(u_4)=2$ in
$\hat{\Delta}$.  If $d(v_3)=3$ in $\hat{\Delta}$ then $B$ is cyclic so $c(\hat{\Delta}) \leq c(3,3,4,4,4)= - \pi /6$.  On the other hand if at least one of $d(u_3)$, $d(u_4)$ does not 
equal 2 then (as noted above) it must be at least 4 and then again
$c(\hat{\Delta}) \leq c(3,3,4,4,4)$.  Thus if $\hat{\Delta}$ is inner then
$c^{\ast} (\hat{\Delta}) \leq 0$, otherwise $c^{\ast}(\hat{\Delta}) \leq c(k_0,3,3,4,4) < 4 \pi / k_0$.  A similar argument applies to $\hat{\Delta}$ of Figure 4.4(iv) for the subcase 
$b_1 b_4^{-1} b_2^{-1}=1$.

\medskip
Let $a_1=a_2$ and $a_3=a_4^{-1}$.  Then $\mathcal{P}_X$ is equivalent to
\[
\mathcal{P}_{X,4} \colon \langle A \ast B,X,Y | Y^{-1} Xa_1 X^{-1} b_1,
Y^2 b_1^{-1} b_2 X a_3 X^{-1} b_3 X a_3^{-1} X^{-1} b_4 \rangle
\]
whose star graph $\Gamma_4$ is again given by Figure 4.3(vi).  Assigning in $\Gamma_4$ weight 1 to both the $a_3$ edges and $e_1$, weight $\frac{1}{2}$ to $b_1^{-1} b_2$, $b_1$, $e_2$ 
and $b_4$, and weight 0 to $a_1$ and $b_3$ yields an aspherical weight function unless $b_2 \in \langle b_3 \rangle$ or $b_4 \in \langle b_3 \rangle$.  If
$b_1=b_2 b_4$ and $b_2 \in \langle b_3 \rangle$ then assigning weight 1 to both the $a_3$ edges, $b_1^{-1} b_2$, $b_1$ and $b_4$, and weight 0 to the remaining edges yields an aspherical weight function; or if $b_1=b_2 b_4$ and $b_4 \in \langle b_3 \rangle$ then an aspherical weight function is obtained by assigning weight 1 to both the $a_3$ edges, $b_1^{-1} b_2$, $e_2$ and $b_4$, and weight 0 to the remaining edges.  It can be assumed therefore that $b_1 \neq b_2 b_4$.

\medskip
Let $b_2 = b_4^{-1}$.  Then $\mathcal{P}_X$ is equivalent to
\[
\mathcal{P}_{X,5} \colon \langle A \ast B,X,Y \mid Y^{-1} Xa_3^{-1} X^{-1} b_2^{-1} X,
Ya_1 X^{-1} b_1 Xa_1 Y^{-1} b_3 \rangle
\]
whose star graph $\Gamma_5$ is given by Figure 4.5(i).  If $b_1 \notin \langle b_2 \rangle$ then an aspherical weight function is obtained by assigning in $\Gamma_5$ weight 1 to edges $e_2$ and $b_2$, weight $\frac{1}{2}$ to both the $a_1$ edges, $e_1$ and $a_3$, and weight 0 to $b_1$ and $b_3$; or if $b_1 \in \langle b_2 \rangle$ then assigning weight 1 to $b_1$, weight 0 to $b_3$ and weight $\frac{1}{2}$ to the remaining edges yields an aspherical weight function on noting that $b_1=b_2^{\pm 1}$ would imply $n_B \geq 3$.  It can be assumed then that $b_2 \neq b_4^{-1}$.

\medskip
We are left to consider when $b_1 \neq b_2 b_4$, $b_2 \neq b_4^{-1}$ and either
$b_4 \in \langle b_3 \rangle$ or $b_2 \in \langle b_3 \rangle$.  If $b_1=b_2$ and
$b_1=b_4$ then $n_B \geq 3$; or if $b_1 \neq b_2$ and $b_1 \neq b_4$ then checking the possible labels shows that $d(\Delta) \geq 4$ and $c(\Delta) \leq 0$ for each inner region $\Delta$ and the result follows.  (We remark that assumption (\textbf{A}) is used here.)  Since $b_1=b_2$, $b_1 \neq b_4$ is symmetric to $b_1 \neq b_2$, $b_1=b_4$ we consider only the latter case and then $\mathcal{P}_X$ is equivalent to
\[
\mathcal{P}_{X,6} \colon \langle A \ast B,X,Y \mid Y^{-1} b_1 Xa_1 X^{-1} ,
Y^2 b_2 Xa_3 X^{-1} b_3 Xa_3^{-1} X^{-1} \rangle
\]
whose star graph $\Gamma_6$ is given by Figure 4.5(ii).  If $b_4 \in \langle b_3 \rangle$ then assigning in $\Gamma_6$ weight 1 to both $a_3$ edges and $e_1$, weight
$\frac{1}{2}$ to $e_2$, $b_1$, $e_3$ and $b_3$, and weight 0 to $a_1$ yields an aspherical weight function on noting that $b_1=b_3^{\pm 1}$ implies $n_B \geq 3$; so let
$b_2 \in \langle b_3 \rangle$.  Assigning weight 1 to both $a_3$ edges and $e_2$, weight $\frac{1}{2}$ to $e_1$, $b_2$, $e_3$ and $b_3$, and weight 0 to $b_1$ yields an aspherical weight function unless $b_2=b_1^2$, so assume this holds.  Then
$b_2 \neq b_3^{\pm 1}$ for otherwise $B$ is cyclic; $b_2 \neq b_1^{\pm 1}$ and
$b_2 \neq b_4^{\pm 1}$ for otherwise $n_B \geq 3$; and $l(v)=b_2b_4 w$ forces
$d(v) \geq 4$.
All of this implies $d(\Delta) \geq 4$ and $c(\Delta) \leq 0$ unless $\Delta$ is given by Figure 4.5(iii) in which $d(v_1) \geq 5$.  Add $c(\Delta) \leq c(3,4,4,5) = \pi / 15$ to $c(\hat{\Delta})$ as shown.  If $d(u_2) \geq 4$ in $\hat{\Delta}$ then, assuming $\hat{\Delta}$ inner, $c^{\ast} (\hat{\Delta}) \leq c(3,4,4,4,4) + \pi /15 =
- 4 \pi / 15$; or if $d(u_2)=2$ as shown then
$c^{\ast}(\hat{\Delta}) \leq c(3,3,4,4,4) + \pi/15 = - \pi/10$.
On the other hand if $\hat{\Delta}$ is a boundary region then
$c^{\ast}(\hat{\Delta}) \leq c(k_0,3,3,4,4) + \pi/15 < \frac{4 \pi}{k_0}$ and the result follows.

\medskip
Let $a_1=a_2^{-1}$ and $a_3=a_4^{-1}$.  Then $\mathcal{P}_X$ is equivalent to
\[
\mathcal{P}_{X,7} \colon \langle A \ast B,X,Y \mid Y^{-1} Xa_1 X^{-1} b_1,
Yb_1 Y^{-1} b_2 Xa_3 X^{-1} b_3 Xa_3^{-1} X^{-1} b_4 \rangle
\]
whose star graph $\Gamma_7$ is given by Figure 4.5(iv) in which the labels
$b_{1,1} = b_{1,2} = b_1$.  Assigning in $\Gamma_7$ weight 1 to $b_3$ and both the $a_3$ edges, weight $\frac{1}{2}$ to $e$, $b_{1,1}$, $b_2$ and $b_4$, and weight 0 to $a_1$ and $b_{1,2}$ yields an aspherical weight function unless $b_2b_4=1$, so assume this holds.  Then $\mathcal{P}_X$ is equivalent to
\[
\mathcal{P}_{X,8} \colon \langle A \ast B,X,Y \mid Y^{-1}Xa_1^{-1} X^{-1} b_2 Xa_3 X^{-1}, b_1 Yb_3 Y^{-1} \rangle
\]

%\newpage
\begin{figure}
\begin{center}
\psfig{file=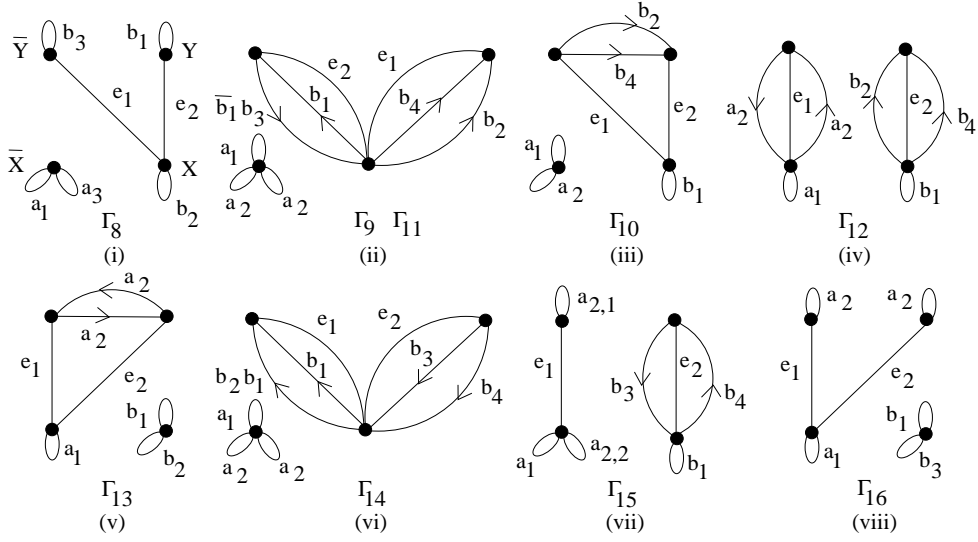}
\end{center}
\caption{star graphs}
\end{figure}

\noindent whose star graph $\Gamma_8$ is given by Figure 4.6(i).  Assigning in $\Gamma_8$ weight 1 to $e_1$, $e_2$ and $a_3$ and weight 0 to all other edges yields an aspherical weight 
function.

\medskip
Let $a_1=a_3$ and $a_2=a_4$.  Then $\mathcal{P}_X$ is equivalent to
\[
\mathcal{P}_{X,9} \colon \langle A \ast B,X,Y \mid Y^{-1} Xa_1 X^{-1} b_1,
YXa_2 X^{-1} b_2 Yb_1^{-1} b_3 Xa_2 X^{-1} b_4 \rangle
\]
whose star graph $\Gamma_9$ is given by Figure 4.6(ii).  Assigning in $\Gamma_9$ weight 1 to both the $a_2$ edges, weight 0 to $a_1$ and weight $\frac{1}{2}$ to the remaining edges yields an aspherical weight function unless $b_1=b_3$ or $b_2=b_4$.  If $b_1=b_3$ and $b_2=b_4$ then $\mathcal{P}_X$ is aspherical because
$\langle A \ast B,X \mid Xa_1 X^{-1} b_1 Xa_2 X^{-1} b_2 \rangle$ is aspherical
\cite[Theorem 2.3]{BP}.
The case $b_2=b_4$ is symmetric to $b_1=b_3$ so it is enough to consider $b_1=b_3$, in which case $\mathcal{P}_X$ is equivalent to
\[
\mathcal{P}_{X,10} \colon \langle A \ast B,X,Y \mid Y^{-1} Xa_1 X^{-1} b_1 Xa_2 X^{-1}, Yb_2 Yb_4 \rangle
\]
whose star graph $\Gamma_{10}$ is given by Figure 4.6(iii).  Assigning in $\Gamma_{10}$ weight 1 to $e_1$, $e_2$ and $a_1$ and weight 0 to the remaining edges yields an aspherical weight function.

\medskip
Let $a_1=a_3$ and $a_2=a_4^{-1}$.  Then $\mathcal{P}_X$ is equivalent to
\[
\mathcal{P}_{X,11} \colon \langle A \ast B,X,Y \mid Y^{-1} Xa_1 X^{-1} b_1,
YXa_2 X^{-1} b_2 Yb_1^{-1} b_3 Xa_2^{-1} X^{-1} b_4 \rangle
\]
whose star graph $\Gamma_{11}$ is also given by Figure 4.6(ii).  Assigning in
$\Gamma_{11}$ weight 1 to both the $a_2$ edges, weight 0 to the $b_1$ edge and weight $\frac{1}{2}$ to the remaining edges yields an aspherical weight function unless
$b_1=b_3$ or $b_2=b_4$.  Now $b_2=b_4$ is symmetric to $b_1=b_3$ so is enough to consider $b_1=b_3$ in which case $\mathcal{P}_X$ is equivalent to
\[
\mathcal{P}_{X,12} \colon \langle A \ast B,X,Y \mid Y^{-1} Xa_1 X^{-1} b_1 X,
Ya_2 X^{-1} b_2 Ya_2^{-1} X^{-1} b_4 \rangle
\]
whose star graph $\Gamma_{12}$ is given by Figure 4.6(iv).  Assigning in $\Gamma_{12}$ weight $\frac{1}{2}$ to each edge yields an aspherical weight function unless
$b_1b_2^{\pm 1}=1$, $b_1b_4^{\pm 1}=1$, $b_2b_4^{-1}=1$ or $b_1^{\pm 1} b_2b_4^{-1}=1$.
But $b_1b_2^{\pm1}=1$ or $b_1b_4^{\pm 1}=1$ implies $n_B \geq 3$, so let $b_2b_4^{-1}=1$ in which case $\mathcal{P}_X$ is equivalent to
\[
\mathcal{P}_{X,13} \colon \langle A \ast B,X,Y \mid Y^{-1} X^{-1} b_2 Xa_1 X^{-1} b_1X, Ya_2 Ya_2^{-1} \rangle
\]
whose star graph $\Gamma_{13}$ is given in Figure 4.6(v).  Assigning in $\Gamma_{13}$ weight 1 to $e_1$, $e_2$ and $b_1$ and weight 0 to the remaining edges yields an aspherical weight function.  This leaves $b_1^{\pm 1} b_2 b_4^{-1}=1$ in which case assign in $\Gamma_{12}$ weight 1 to $b_1$, weight 0 to $e_2$ and weight $\frac{1}{2}$ to the remaining edges.  A routine check shows that any admissable path of weight less than 2 involving the edges $b_1$, $b_2$, $b_4$ or $e_2$ forces $B$ cyclic and it follows that this yields an aspherical weight function.

\medskip
Let $a_1=a_3^{-1}$ and $a_2=a_4^{-1}$.  Then $\mathcal{P}_X$ is equivalent to
\[
\mathcal{P}_{X,14} \colon \langle A \ast B,X,Y \mid Y^{-1} Xa_1 X^{-1} b_1,
YXa_2 X^{-1} b_2 b_1 Y^{-1} b_3 Xa_2 X^{-1} b_4 \rangle
\]
whose star graph $\Gamma_{14}$ is given by Figure 4.6(vi).  Assigning in $\Gamma_{14}$ weight 1 to both the $a_2$ edges, weight 0 to $a_1$ and weight $\frac{1}{2}$ to the remaining edges yields an aspherical weight function unless $b_1b_2=1$ or $b_3b_4=1$.  Now $b_3b_4=1$ is symmetric to $b_1b_2=1$ so it is enough to consider $b_1b_2=1$ in which case $\mathcal{P}_X$ is equivalent to
\[
\mathcal{P}_{X,15} \colon \langle A \ast B,X,Y \mid Y^{-1} Xa_1 X^{-1} b_1X,
Ya_2 Y^{-1} b_3 Xa_2^{-1} X^{-1} b_4 \rangle
\]
whose star graph $\Gamma_{15}$ is given by Figure 4.6(vii) in which
$a_{2,1}=a_{2,2}=a_2$.  Assigning in $\Gamma_{15}$ weight 1 to $a_{2,2}$ and $e_1$, weight 0 to $a_1$ and $a_{2,1}$, and weight $\frac{1}{2}$ to the remaining edges yields an aspherical weight function unless $b_1 b_3^{\pm 1} =1$, $b_1b_4^{\pm 1}=1$, $b_3b_4=1$ or
$b_1^{\pm 1} b_4 b_3=1$.  But $b_1b_3^{\pm 1}=1$ or $b_1b_4^{\pm 1}=1$ implies
$n_B \geq 3$, so let $b_3b_4=1$ in which case $\mathcal{P}_X$ is equivalent to
\[
\mathcal{P}_{X,16} \colon \langle A \ast B,X,Y \mid Y^{-1} X^{-1} b_3^{-1} Xa_1 X^{-1} b_1 X, Ya_2 Y^{-1} a_2^{-1} \rangle
\]
whose star graph $\Gamma_{16}$ is given by Figure 4.6(viii).  Assigning in $\Gamma_{16}$ weight 1 to
$e_1$, $e_2$ and $b_3$ and weight 0 to the remaining edges yields an aspherical weight function.  This leaves $b_1^{\pm 1} b_4 b_3=1$ in which case assigning in $\Gamma_{15}$ weight 1 to the edges $e_1$, $a_{2,2}$ and $b_1$, weight $\frac{1}{2}$ to $b_3$ and $b_4$, and weight 0 to $e_2$, $a_{2,1}$ and $a_1$ yields an aspherical weight function.

\medskip
We turn now to the case $n_A=3$.

\medskip
Let $a_1=a_2=a_3$, in which case $a_4$ is then isolated.  Then $\mathcal{P}_X$ is equivalent to
\[
\mathcal{P}_{X,17} \colon \langle A \ast B,X,Y \mid Y^{-1} Xa_1 X^{-1} b_1,
Y^2 b_1^{-1} b_2 Yb_1^{-1} b_3 Xa_4 X^{-1} b_4 \rangle
\]
whose star graph $\Gamma_{17}$ is given by Figure 4.7(i).  Assigning in $\Gamma_{17}$ weight 1 to $b_4$, weight 0 to $e_2$ and weight $\frac{1}{2}$ to the remaining six edges gives an aspherical weight function unless either $b_1=b_2$ or $b_1=b_3$ or $b_2=b_3$.  If $b_1=b_2$ then assigning weight 1 to $a_1$,
$b_1^{-1} b_2$ and $e_1$, weight $\frac{1}{2}$ to $b_1^{-1} b_3$ and $b_4$ and weight 0 to $a_4$, $b_1$ and $e_2$ gives an aspherical weight function except 
%\newpage
\begin{figure}
\begin{center}
\psfig{file=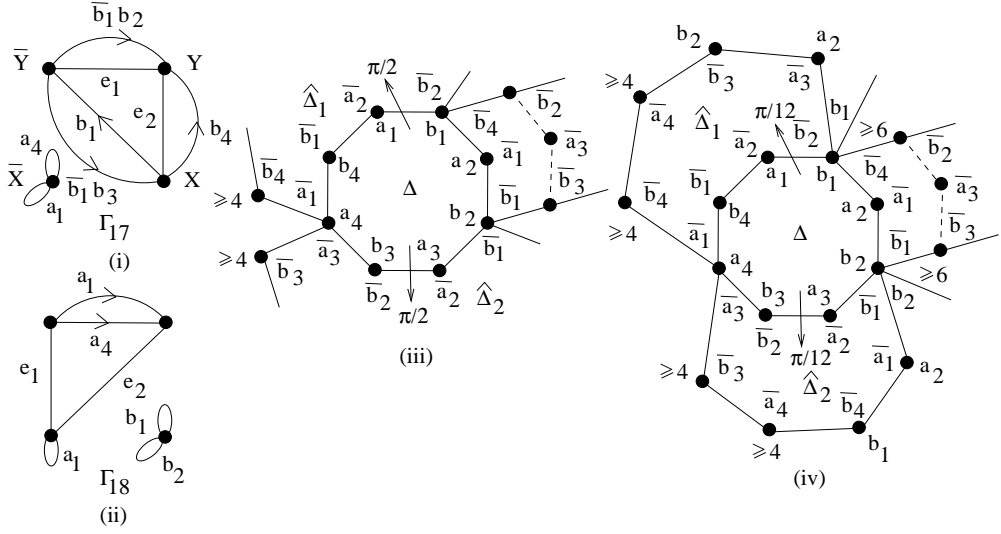}
\end{center}
\caption{star graphs and curvature distribution}
\end{figure}
when $b_3=b_4^{\pm 1}$ since all other paths in the $b_i$ of weight less than 2 forces $b_3$ or $b_4$ isolated; if $b_1=b_3$ then assigning weight 1 to $a_4$ and $e_2$, 
weight 0 to 
$a_1$ and $b_1$ and weight $\frac{1}{2}$ to the remaining four edges similarly yields an aspherical weight function unless $b_2=b_4$; or if $b_2=b_3$ then assigning weight 1 to $b_1^{-1} b_3$, weight 0 to $b_1$ and weight $\frac{1}{2}$ to the remaining six edges yields an aspherical weight function unless $b_1=b_4$.

\medskip
Let $b_1=b_2$ and $b_3=b_4^{\pm 1}$.  Then $d(u_i) \neq 3$ ($1 \leq i \leq 4$),
$d(u_4) \geq 4$ and $d(v_i) \neq 3$ ($1 \leq i \leq 4$).  Moreover $d(u_1)=2$ forces $l(v_4)=b_4b_1^{-1} w$ or $b_4 b_2^{-1} w$ and so $d(v_4) \geq 4$; $d(u_2)=2$ forces either $d(v_1)=b_1 b_4^{-1} w$ and $d(v_1) \geq 4$ or
$l(v_2)=b_3^{-1} b_2 w$ and $d(v_2) \geq 4$; and $d(u_3)=2$ forces
$l(v_3)=b_1^{-1} b_3 w$ or $b_2^{-1} b_3 w$ and $d(v_3) \geq 4$.  It follows that $d(\Delta) \geq 4$ and $c(\Delta) \leq 0$ for each interior region $\Delta$ hence the result.

\medskip
Let $b_1=b_3$ and $b_2=b_4$.  Then $\mathcal{P}_X$ is equivalent to
\[
\mathcal{P}_{X,18} \colon \langle A \ast B,X,Y \mid Y^{-1} X^{-1} b_2 X a_1 X^{-1} b_1 X_1, Ya_1 Ya_4 \rangle
\]
whose star graph $\Gamma_{18}$ is given by Figure 4.7(ii).  Assigning in $\Gamma_{18}$ weight 1 to $e_1$, $e_2$ and $b_1$ and weight 0 to the remaining four edges gives an aspherical weight function.

\medskip
This leaves $b_1=b_4$ and $b_2=b_3$.  Using $d(u_i)=3$ and $d(v_i) \neq 3$ ($1 \leq i \leq 4$) together with assumption (\textbf{A}) in particular, we find that if $c(\Delta) > 0$ then $\Delta$ is given by Figure 4.7(iii).  Note that there are two possible regions $\Delta$ according to $l(u_2)=a_2 a_1^{-1}$ or $a_2 a_3^{-1}$ as shown.  Note also that assumption (\textbf{A}) forces $d(v_4) \geq 4$ in
$\hat{\Delta}_1$ and $d(v_3) \geq 4$ in $\hat{\Delta}_2$.
Assume until otherwise stated that $\hat{\Delta}_1$ and $\hat{\Delta}_2$ are interior regions in Figure 4.7(iii).
If either $d(u_3) \geq 4$ or $d(v_3) \geq 4$ in $\hat{\Delta}_1$ then
%\newpage
\begin{figure}
\begin{center}
\psfig{file=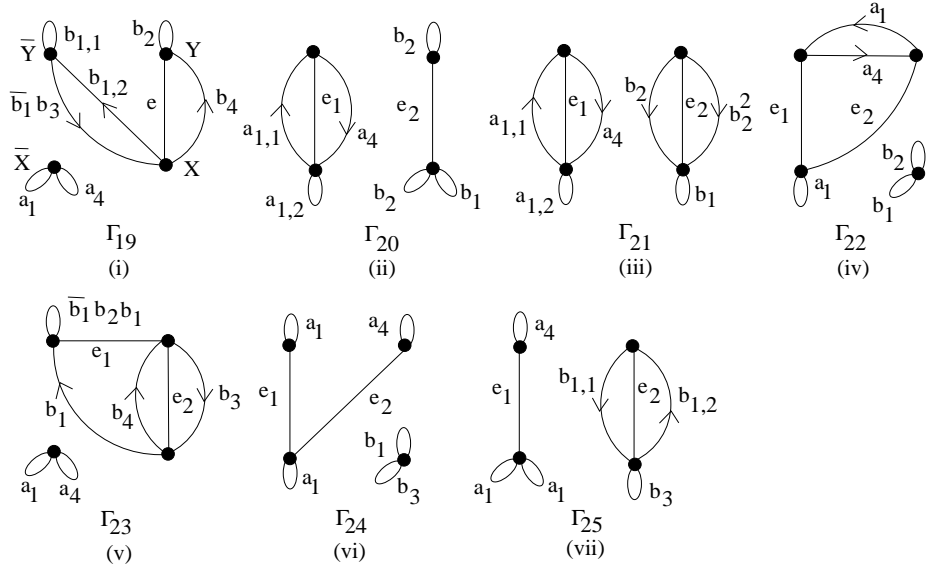}
\end{center}
\caption{star graphs}
\end{figure}
$c(\hat{\Delta}_1) \leq c(4,4,4,4,4)=- \frac{\pi}{2}$ so add
$c(\Delta) \leq c(4,4,4) = \frac{\pi}{2}$ to $\hat{\Delta}_1$ across the
$(a_2^{-1},b_2^{-1})$-edge as indicated; or if either $d(u_1) \geq 4$ or
$d(v_4) \geq 4$ in $\hat{\Delta}_2$ then add $c(\Delta)$ to $c(\hat{\Delta}_2) \leq - \frac{\pi}{2}$ across the $(a_2^{-1},b_2^{-1})$-edge of $\hat{\Delta}_2$ as indicated.  Assume otherwise so that $\hat{\Delta}_1$ and $\hat{\Delta}_2$ are given by Figure 4.7(iv).
Then $d(v_1) \geq 6$ and $d(v_2) \geq 6$ in $\Delta$ so add
$\frac{1}{2} c(\Delta) \leq \frac{1}{2} c(4,6,6) = \frac{\pi}{12}$ to each of
$c(\hat{\Delta}_i) \leq c(4,4,4,6) = - \frac{\pi}{6}$ as shown.  We then have
$c^{\ast}(\hat{\Delta}) \leq 0$ if $\hat{\Delta}$ is interior or if $\hat{\Delta}$ is a boundary region either $c^{\ast}(\hat{\Delta})=c(\hat{\Delta})=c(k_0,4,4)$ or
$c^{\ast}(\hat{\Delta}) \leq c(k_0,4,4,4,4) + \frac{\pi}{2}$ or
$c^{\ast}(\hat{\Delta}) \leq c(k_0,4,4,4) + \frac{\pi}{12}$, in which case
$c^{\ast}(\hat{\Delta}) < \frac{4 \pi}{k_0}$ and the result follows.

\medskip
Let $a_1=a_2^{-1}=a_3$ in which case $a_4$ is isolated.  Then $\mathcal{P}_X$ is equivalent to
\[
\mathcal{P}_{X,19} \colon \langle A \ast B,X,Y \mid Y^{-1}Xa_1 X^{-1} b_1,
Yb_1 Y^{-1} b_2 Yb_1^{-1} b_3 Xa_4 X^{-1} b_4 \rangle
\]
whose star graph $\Gamma_{19}$ is given by Figure 4.8(i) in which $b_{1,1}=b_{1,2}=b_1$.
Assigning in $\Gamma_{19}$ weight 1 to $a_4$, weight 0 to $a_1$ and weight $\frac{1}{2}$ to the remaining edges yields an aspherical weight function unless
$b_4 b_2^{\pm 1} =1$ or $b_3 b_1^{\pm 1}=1$.  Since these cases are symmetric we consider only $b_3 b_1^{\pm 1}=1$.  Assigning weight 1 to $a_4$ and $b_{1,2}$, weight 0 to $a_1$ and $e_1$ and weight $\frac{1}{2}$ to the remaining edges yields an aspherical weight function unless $b_4 b_2^{\pm 1}=1$.

\medskip
 Let
$b_1=b_3^{-1}$ and $b_4 b_2^{\pm 1}=1$.  Then $\mathcal{P}_X$ is equivalent to
\[
\mathcal{P}_{X,20} \colon \langle A \ast B,X,Y \mid Y^{-1}X^{-1}b_1 Xa_1^{-1} X^1,
Yb_2 Y^{-1} a_4 X^{-1} b_2^{\pm 1} Xa_1 \rangle
\]
whose star graph $\Gamma_{20}$ is given by Figure 4.8(ii) in which
$a_{1,1}=a_{1,2}=a_1$.  Assigning in $\Gamma_{20}$ weight 1 to $e_1$, $a_{1,2}$ and both the $b_2$ edges, and weight 0 to the remaining edges yields an aspherical weight function.

\medskip
  Let $b_1=b_3$ and $b_4b_2=1$.  Then $\mathcal{P}_X$ is equivalent to
\[
\mathcal{P}_{X,21} \colon \langle A \ast B,X,Y \mid Y^{-1} b_2 Xa_1 X^{-1} b_1 X,
b_2^{-2} Ya_1^{-1} X^{-1} Y a_4 X^{-1} \rangle
\]
whose star graph $\Gamma_{21}$ is given by Figure 4.8(iii) in which $a_{1,1}=a_{1,2}=a_1$.
Assigning in $\Gamma_{21}$ weight 1 to $e_1$, $a_{1,2}$, $e_2$ and $b_2^2$ and weight 0 to the remaining edges yields an aspherical weight function. 

\medskip
 This leaves $b_1=b_3$ and $b_2=b_4$. Then $\mathcal{P}_X$ is equivalent to
\[
\mathcal{P}_{X,22} \colon \langle A \ast B,X,Y \mid Y^{-1} X^{-1} b_2 Xa_1 X^{-1} b_1 X, a_4 Y a_1^{-1} Y \rangle
\]
whose star graph $\Gamma_{22}$ is given by Figure 4.8(iv).  Assigning in $\Gamma_{22}$ weight 1 to $e_1$, $e_2$ and $b_1$ and weight 0 to the remaining edges yields an aspherical weight function.

\medskip
Finally let $a_1=a_2=a_3^{-1}$ in which case $a_4$ is isolated.  Then $\mathcal{P}_X$ is equivalent to
\[
\mathcal{P}_{X,23} \colon \langle A \ast B,X,Y \mid Y^{-1} Xa_1 X^{-1} b_1,
Y^2 b_1^{-1} b_2 b_1 Y^{-1} b_3 X a_4 X^{-1} b_4 \rangle
\]
whose star graph $\Gamma_{23}$ is given by Figure 4.8(v).  Assigning in $\Gamma_{23}$ weight 1 to $e_1$ and $a_4$, weight 0 to $a_1$ and $b_1^{-1} b_2 b_1$ and weight $\frac{1}{2}$ to the remaining four edges yields an aspherical weight function unless $b_3b_4=1$, in which case assigning weight 1 to $b_3$ and $b_4$, weight 0 to $b_1^{-1} b_2 b_1$ and $e_2$ and weight $\frac{1}{2}$ to the remaining four edges yields an aspherical weight function unless $b_1 (b_1^{-1} b_2 b_1)^m = 1$ for some $m$.  But $|m|>1$ forces $b_1$ to be isolated so it remains to consider $b_1 b_2^{\pm 1}=1$.  If
$b_3b_4=b_1b_2=1$ then $\mathcal{P}_X$ is equivalent to
\[
\mathcal{P}_{X,24} \colon \langle A \ast B,X,Y \mid Y^{-1} X^{-1} b_3^{-1} Xa_1 X^{-1} b_1 X, Ya_1 Y^{-1} a_4 \rangle
\]
whose star graph $\Gamma_{24}$ is given by Figure 4.8(vi).  Assigning in $\Gamma_{24}$ weight 1 to $e_1$, $e_2$ and $b_1$ and weight 0 to the remaining edges yields an aspherical weight function.  Or if $b_3b_4=b_1b_2^{-1}=1$ then $\mathcal{P}_X$ is equivalent to
\[
\mathcal{P}_{X,25} \colon \langle A \ast B,X,Y \mid Y^{-1}Xa_1^{-1} X^{-1} b_3 X,
Ya_4 Y^{-1} b_1 Xa_1 X^{-1} b_1 \rangle
\]
whose star graph $\Gamma_{25}$ is given by Figure 4.8(vii) in which $b_{1,1}=b_{1,2}=b_1$.  Assigning a $\Gamma_{25}$ weight 1 to each $a_1$ edge and $b_{1,1}$, weight $\frac{1}{2}$ to $e_2$ and $b_3$ and weight 0 to $a_4$, $e_1$ and $b_{1,2}$ yields an aspherical weight function.

\medskip
 This completes the proof of Theorem 1.

%%%%%%%%%%%%%%%%%%%%%%%%%%%%%%%%%%%%%%%%%%%%%%%%%%%%%%%%%%%%%%%%%%%%%%%%%%%%%%

\end{document}